\documentclass[11pt]{amsart}
\usepackage{amsfonts}
\usepackage{amsmath}
\usepackage{amssymb}
\usepackage{makecell}
\usepackage{mathrsfs}
\usepackage[utf8]{inputenc}

\usepackage{amsthm,amsfonts,latexsym,epsfig,geometry}
\usepackage{hyperref,times}
\usepackage{amscd}
\usepackage{mathtools}
\usepackage{color}

\allowdisplaybreaks[1]
\newtheorem{theorem}{Theorem}[section]
\newtheorem{lemma}[theorem]{Lemma}

\newtheorem{corollary}[theorem]{Corollary}
\theoremstyle{definition}
\newtheorem{definition}[theorem]{Definition}

\theoremstyle{remark}

\numberwithin{equation}{section}
\usepackage{hyperref}
\hypersetup{
	colorlinks=true,
	linkcolor={blue}, urlcolor={blue},
	citecolor={red}, anchorcolor = {blue}}
\begin{document}
\title[ ]{Asymptotic and Arithmetic Aspects of Generalized Colored Overpartitions}
\author{ANAKHA V and CHIRANJIT RAY }
\address{Anakha V, National Institute of Technology Calicut, Kerala 673601, INDIA}
\curraddr{}
\email{anakha$\textunderscore$p240019ma@nitc.ac.in }
\address{Chiranjit Ray, National Institute of Technology Calicut, Kerala 673601, INDIA}
\curraddr{}
\email{chiranjitray@nitc.ac.in }
\subjclass[2010]{Primary: 05A17; 11P82; 11P83; 11F11; 11F20}
\date{\today}
\keywords{Colored Partitions, Overcolored Partitions; $q-$Series; Eta-Quotients; Modular Forms; Distribution; Asymptotic Analysis}
\thanks{} 
\begin{abstract}

Colored partitions constitute a well-established and extensively studied area of research with numerous generalizations arising from imposing restrictions on the allowed colors of the parts. In particular, restricted colored partitions have received considerable attention owing to their rich combinatorial and arithmetic properties. Motivated by these developments, we consider a colored overpartition function $\overline{\mathfrak{C}}_{m,r,s}(n)$, which enumerates the overpartitions of $n$ such that each part divisible by $m$ can be assigned any of $r$ colors, while each part not divisible by $m$ may be assigned any of $s$ colors. We investigate an asymptotic formula and several arithmetic properties of this function, including its density properties.

\end{abstract}
\maketitle
\section{Introduction and statement of results}
\label{intro}
 A \emph{partition} of a nonnegative integer $n$ is defined by a sequence of positive integers $(\lambda_1,\lambda_2,\hdots,\lambda_k)$ with $\lambda_1\geq \lambda_2\geq \cdots \geq \lambda_k$ such that $n=\lambda_1+\lambda_2+\cdots+\lambda_k$. The integers $\lambda_i$ are called \emph{parts} of the partition. Let $p(n)$ denote the number of partitions of $n$, where $p(0):=1$, by convention. Its generating function is given by
 \begin{align*}
    \sum_{n=0}^{\infty}{p}(n)q^n=\prod_{n=1}^\infty\frac{1}{1-q^n}=\frac{1}{(q;q)_\infty}.
\end{align*}

The following standard $q$-series notation will be employed throughout this paper, with $q \in \mathbb{C}$ satisfying $|q| < 1$:

\begin{equation*}
\begin{split}
    (a;q)_0&:= 1 \\
     (a;q)_n&:=   \prod_{k=0}^{n-1}(1-aq^k) ~\text{ for }n\geq 1, \text{ and }\\
    (a;q)_\infty&:=\prod_{k=0}^{\infty}(1-aq^k).
\end{split} 
\end{equation*}
Closely connected with these products is Ramanujan's general theta function $f(a,b)$, which is defined by \cite[pp. 34, 18.1]{ramanujan3},

\begin{align*}
    f(a,b)=\sum_{n=-\infty}^{\infty}a^{n(n+1)/2}b^{n(n-1)/2}=(-a;ab)_\infty(-b;ab)_\infty(ab;ab)_\infty, \text{ for }|ab|<1.
\end{align*}
A notable special case is the theta function
 \begin{align*} 
     \varphi(q):=f(q,q)=1+2\sum_{n=1}^{\infty}q^{n^2}=\frac{(-q;q^2)_\infty(q^2;q^2)_\infty}{(q;q^2)_\infty(-q^2;q^2)_\infty}=\frac{(q^2;q^2)_\infty^5}{(q;q)_\infty^2 (q^4;q^4)_\infty^2}.
 \end{align*}
Substituting $q$ with $-q$ in the above identity, we obtain

\begin{align*}
    \varphi(-q)=\frac{(q;q)_\infty^2}{(q^2;q^2)_\infty}.
\end{align*}

An \emph{overpartition} of $n$ is a partition of $n$ in which the first occurrence of each part may be overlined.
The concept of overpartitions was introduced by Corteel and Lovejoy \cite{corteel2004overpartitions}. Let $\overline{p}(n)$ denote the number of overpartitions of $n$ with the convention that $\overline{p}(0):=1$. For example, $\overline{p}(4)=14$ and the corresponding overpartitions are 
\begin{align*}
    (4),(\overline{4}), (3,1),(\overline{3},1),(3,\overline{1}),(\overline{3},\overline{1}),(2,2), (\overline{2},2), (2,1,1),(\overline{2},1,1), (2,\overline{1},1), (\overline{2},\overline{1},1), (1,1,1,1), (\overline{1},1,1,1).
\end{align*}
The generating function for $\overline{p}(n)$ is given by
\begin{align*}
    \sum_{n=0}^{\infty}\overline{p}(n)q^n=\prod_{n=1}^\infty\left(\frac{1+q^n}{1-q^n}\right)=\frac{(q^2;q^2)_\infty}{(q;q)_\infty^2}.
\end{align*}

For a positive integer $k$, let $p_k(n)$ denote the number of $k$-colored partitions of $n$, which is defined as the partitions of $n$ in which each part can appear in $k$ colors. The generating function for $p_k(n)$ is given by
\[
\sum_{n=0}^{\infty} p_k(n)q^n
=\frac{1}{(q;q)_\infty^{k}}.
\]
Using Euler's and Jacobi's identities, Gandhi \cite{gandhi1963congruences} obtained several infinite families of colored partition congruences, including the following:
\begin{align*}
   & p_2(5n+3)\equiv 0\pmod{5}\\
   & p_8(11n+4)\equiv 0\pmod{11}.
\end{align*}

Agarwal and Andrews \cite{agarwal1987rogers} initiated an investigation of colored partitions within the framework of unrestricted partitions. Since then, numerous variants and extensions of colored partitions have been introduced and studied from a variety of combinatorial and arithmetic perspectives. For instance, Merca and Simion \cite{merca2023n} studied the number of $n$-color partitions of $n$ into distinct parts which have a number of parts congruent to $r$ modulo $2$, where the $n$-color partition is a partition in which a part of size $n$ can come in $n$ different colors. Baruah and Das \cite{baruah2022generating} investigated the $9$-regular and $27$-regular partitions in $3$ colors.

In \cite{thejitha2026arithmetic}, Thejitha, Sellers, and Fathima studied colored partitions in which even parts admit $r$ possible colors, whereas odd parts admit $s$ possible colors, for fixed integers $r,s \geq 1$. They denoted the number of such partitions of $n$ by $a_{r,s}(n)$. The generating function associated with $a_{r,s}(n)$ is
\[
\sum_{n=0}^{\infty} a_{r,s}(n)q^n
=\frac{(q^2;q^2)_\infty^{s-r}}{(q;q)_\infty^{s}}.
\]

The parameter choices $r=1$ and $s=1$ were introduced independently in the works of Hirschhorn and Sellers \cite{hirschhorn2025family} and Amdeberhan, Sellers, and Singh \cite{amdeberhan2025arithmetic}, respectively.

Recently, Thejitha and Fathima \cite{thejitha2026overcolored} defined the function $\overline{a}_{r,s}(n)$ to enumerate number of overpartitions of $n$ in which each even part may be assigned one of $r$ colors and each odd part may be assigned one of $s$ colors, where $r,s \geq 1$ are fixed integers. As in ordinary overpartitions, the first occurrence of any distinct part is allowed to be overlined. They proved that the generating function for $\overline{a}_{r,s}(n)$ is
\[
\sum_{n=0}^{\infty}\overline{a}_{r,s}(n)q^n
=\frac{(q^2;q^2)_\infty^{3s-2r}}
{(q;q)_\infty^{2s}(q^4;q^4)_\infty^{s-r}}.
\]

In the same work, the authors showed that $\overline{a}_{r,s}(n)$ satisfies several infinite families of congruences, including numerous Ramanujan-type congruences modulo primes $p\geq 3$ and modulo powers of $2$. 

In our work, we generalize the function $\overline{a}_{r,s}(n)$ by replacing the parity condition with divisibility by an arbitrary integer $m\geq 2$. Specifically, for a fixed integer $m\geq 2$, let $\overline{\mathfrak{C}}_{m,r,s}(n)$ denote the number of overpartitions of $n$ in which each part divisible by $m$ can occur in any of $r$ colors, whereas each part not divisible by $m$ can occur in any of $s$ colors. We obtain the following generating function:
\begin{align}\label{gen}
    \sum_{n=0}^{\infty}\overline{\mathfrak{C}}_{m,r,s}(n)q^n=\frac{(q^2;q^2)_\infty^{s}(q^{2m};q^{2m})_{\infty}^{r-s}}{(q;q)_\infty^{2s}(q^m;q^m)_\infty^{2(r-s)}}.
\end{align}
As an example, if $m=3,~r=2$, and $s=1$, then $\overline{\mathfrak{C}}_{3,2,1}(3)=10$, and the overpartitions associated with this are:
\[(3_1),(3_2),(\overline{3}_1),(\overline{3}_2),(2_1,1_1),(\overline{2}_1,1_1),(2_1,\overline{1}_1),(\overline{2}_1,\overline{1}_1),(1_1,1_1,1_1),(\overline{1}_1,1_1,1_1).\]
Clearly, when $m=2$, the function $\overline{\mathfrak{C}}_{m,r,s}(n)$ reduces to $\overline{a}_{r,s}(n)$. Furthermore, if we set $m=2$ and $s=1$, $\overline{\mathfrak{C}}_{m,r,s}(n)$ coincide with $\overline{a}_c(n)$, the generalized overcubic partitions of $n$, which is considered in \cite{amdeberhan2025arithmetic}. In the same paper it is proved that for a prime $p\geq 3$ and $r$, $1\leq r\leq p-1$, such that $r$ is a quadratic nonresidue modulo $p$,
\[\overline{a}_{kp-1}(pn+r)\equiv 0\pmod{p},\]
for all $n\geq 0$, where $k$ is a positive integer.

In \cite{andrews2015singular}, Andrews defined the singular overpartition function $\overline{C}_{k,i}(n)$, which counts the number of overpartitions of $n$ having no parts divisible by $k$, with overlining is permitted only for parts congruent to $\pm i$ modulo $k$. The corresponding generating function is
 \begin{align*}
     \sum_{n=0}^{\infty}\overline{C}_{k,i}(n)q^n=\frac{(q^k;q^k)_{\infty}(-q^i;q^k)_{\infty}(-q^{k-i};q^k)_{\infty}}{(q;q)_{\infty}}
 \end{align*}
 for $k\geq 3$ and $1\leq i\leq \lfloor \frac{k}{2}\rfloor$. In the same article, Andrews proved the following congruences:
 \begin{align*}
   \overline{C}_{3,1}(9n+3)\equiv \overline{C}_{3,1}(9n+6)\equiv 0 \pmod 3 \text{ \hspace{1cm} for all $n\geq 0$ }.  
 \end{align*}
 The study of Andrews' singular overpartition has become an active area of research. For instance, Chen \textit{et. al} \cite{chen2015arithmetic} studied the arithmetic properties of $\overline{C}_{3,1}(n)$, $\overline{C}_{4,1}(n)$, $\overline{C}_{6,1}(n)$, and $\overline{C}_{6,2}(n)$. Barman and Singh \cite{singh2021divisibility} studied the divisibility of $\overline{C}_{6,2}(n)$ and $\overline{C}_{12,4}(n)$ by powers of $2$ and $3$. In \cite{ray2024distribution}, the distribution of $\overline{C}_{p,1}(n)$ was studied for primes $p\geq 5$. 

 We obtain a relation between $\overline{\mathfrak{C}}_{3,1,2}(n)$ and $\overline{C}_{3,1}(n)$ as stated below. 
 
 \begin{theorem}\label{sing}
 For every $n\geq 1$, we have
      \begin{align*}
         \overline{C}_{3,1}(n)= \overline{\mathfrak{C}}_{3,1,2}(n)+2\sum_{k=1}^{\lfloor \sqrt{n}\rfloor}(-1)^k~ \overline{\mathfrak{C}}_{3,1,2}(n-k^2).
     \end{align*}
 \end{theorem}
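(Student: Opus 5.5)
We will prove the identity at the level of generating functions: we show that $\sum_{n\ge0}\overline{C}_{3,1}(n)q^n$ equals $\varphi(-q)$ times the generating function \eqref{gen} with $m=3$, $r=1$, $s=2$, and then compare coefficients.

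\textbf{Step 1: the generating function of $\overline{\mathfrak{C}}_{3,1,2}(n)$.} Setting $m=3$, $r=1$, $s=2$ in \eqref{gen}, so that $r-s=-1$, we obtain
\begin{align*}
\sum_{n=0}^{\infty}\overline{\mathfrak{C}}_{3,1,2}(n)q^n=\frac{(q^2;q^2)_\infty^{2}(q^{3};q^{3})_\infty^{2}}{(q;q)_\infty^{4}(q^6;q^6)_\infty}.
\end{align*}

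\textbf{Step 2: an eta-quotient for $\overline{C}_{3,1}$.} For $k=3$, $i=1$ we have $k-i=2$, so $(-q;q^3)_\infty(-q^2;q^3)_\infty=(-q;q)_\infty/(-q^3;q^3)_\infty$. Using $(-q;q)_\infty=(q^2;q^2)_\infty/(q;q)_\infty$, and the same identity with $q$ replaced by $q^3$, this becomes
\[
\frac{(q^2;q^2)_\infty(q^3;q^3)_\infty}{(q;q)_\infty(q^6;q^6)_\infty}.
\]
Multiplying by $(q^3;q^3)_\infty/(q;q)_\infty$ gives
\begin{align*}
\sum_{n=0}^{\infty}\overline{C}_{3,1}(n)q^n=\frac{(q^2;q^2)_\infty(q^3;q^3)_\infty^{2}}{(q;q)_\infty^{2}(q^6;q^6)_\infty}.
\end{align*}

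\textbf{Step 3: relating the two products.} Divide the Step 2 product by the Step 1 product. The factors $(q^3;q^3)_\infty^2$ and $(q^6;q^6)_\infty$ cancel, leaving
\[
\frac{(q;q)_\infty^{2}}{(q^2;q^2)_\infty}=\varphi(-q)=1+2\sum_{k\ge1}(-1)^kq^{k^2},
\]
which is the product form of $\varphi(-q)$ recalled in the introduction. Hence
\[
\sum_{n\ge0}\overline{C}_{3,1}(n)q^n=\varphi(-q)\sum_{n\ge0}\overline{\mathfrak{C}}_{3,1,2}(n)q^n.
\]

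\textbf{Step 4: comparing coefficients.} The coefficient of $q^n$ on the right is $\overline{\mathfrak{C}}_{3,1,2}(n)+2\sum_{1\le k^2\le n}(-1)^k\overline{\mathfrak{C}}_{3,1,2}(n-k^2)$. The condition $k^2\le n$ is exactly $1\le k\le\lfloor\sqrt n\rfloor$, which gives the stated formula.

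The only step needing care is Step 2, where the singular overpartition product must be rewritten as an eta-quotient; in particular, checking that the $(q^3;q^3)_\infty$ and $(q^6;q^6)_\infty$ exponents match those of Step 1 so that they cancel completely. Everything else is routine substitution.
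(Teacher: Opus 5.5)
Your proposal is correct and follows the paper's proof essentially verbatim. Both write the generating function of $\overline{C}_{3,1}(n)$ as $(q^2;q^2)_\infty(q^3;q^3)_\infty^2/\bigl((q;q)_\infty^2(q^6;q^6)_\infty\bigr)$, recognize it as $\varphi(-q)=(q;q)_\infty^2/(q^2;q^2)_\infty$ times the generating function of $\overline{\mathfrak{C}}_{3,1,2}(n)$, and compare coefficients; your Step 2 just spells out the product manipulation the paper leaves implicit.
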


In this paper, we undertake a systematic study of the arithmetic properties of $\overline{\mathfrak{C}}_{m,r,s}(n)$ and establish the following congruences modulo $4$.

\begin{theorem}\label{thm1}
    Let $m\geq 2$ be an integer. Let $r,s\geq 1$ be fixed. If $r$ and $s$ are both even, then for every $n\geq 1$,

\begin{align*}
    \overline{\mathfrak{C}}_{m,r,s}(n)\equiv 0 \pmod{4}.
\end{align*}
If $r$ and $s$ are odd, then for every $n\geq 1$,
\begin{align*}
    \overline{\mathfrak{C}}_{m,r,s}(n)\equiv 
    \begin{cases}
        2s, \hspace{0.85cm} \text{if $n=k^2$ for some $k\geq 1$ };\\
        0, \hspace{1cm} \text{otherwise}
    \end{cases}\pmod{4}.
\end{align*}
If $r$ is odd and $s$ is even, then for every $n\geq 1$,
\begin{align*}
    \overline{\mathfrak{C}}_{m,r,s}(n)\equiv 
    \begin{cases}
        2(r-s), \hspace{1cm} \text{if $n=mk^2$ for some $k\geq 1$};\\
        0, \hspace{2.1cm} \text{otherwise}
    \end{cases}\pmod{4}.
\end{align*}
If $r$ is even and $s$ is odd, then for every $n\geq 1$,
\begin{align*}
    \overline{\mathfrak{C}}_{m,r,s}(n)\equiv 
    \begin{cases}
        2s, \hspace{2cm} \text{if $n=k^2$ for some $k\geq 1$ and $m$ is not a square};\\
        2r, \hspace{2cm} \text{if $n=mk^2$ for some $k\geq 1$ and if $m$ is a square};\\
        2(r-s), \hspace{1.05cm}  \text{if $n=mk^2$ for some $k\geq 1$ and if $m$ is not a square};\\
        0, \hspace{2.17cm} \text{otherwise}
    \end{cases}\pmod{4}.
\end{align*}
    
\end{theorem}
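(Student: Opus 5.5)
The plan is to rewrite the generating function \eqref{gen} in terms of $\varphi(-q)$ and then use the fact that $\varphi(-q)^2\equiv 1\pmod 4$. From $\varphi(-q)=(q;q)_\infty^2/(q^2;q^2)_\infty$ we get
\begin{align*}
\frac{(q^2;q^2)_\infty^{s}}{(q;q)_\infty^{2s}}=\varphi(-q)^{-s},\qquad \frac{(q^{2m};q^{2m})_\infty^{r-s}}{(q^m;q^m)_\infty^{2(r-s)}}=\varphi(-q^m)^{-(r-s)}.
\end{align*}
Hence
\begin{align*}
\sum_{n\ge0}\overline{\mathfrak{C}}_{m,r,s}(n)q^n=\varphi(-q)^{-s}\,\varphi(-q^m)^{s-r}.
\end{align*}

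Write $\varphi(-q)=1+2T(q)$ with $T(q)=\sum_{k\ge1}(-1)^kq^{k^2}$. Then
\begin{align*}
\varphi(-q)^2=1+4T(q)+4T(q)^2\equiv 1\pmod 4
\end{align*}
as power series with integer coefficients. Since $\varphi(-q)$ has constant term $1$, its inverse also has integer coefficients. Multiplying $\varphi(-q)^2\equiv1$ by $\varphi(-q)^{-1}$ gives $\varphi(-q)^{-1}\equiv\varphi(-q)\pmod 4$. Consequently $\varphi(-q)^a\equiv\varphi(-q)^{\bar a}\pmod 4$ for every integer $a$, where $\bar a\in\{0,1\}$ is the parity of $a$. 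The same holds with $q$ replaced by $q^m$. Therefore
\begin{align*}
\sum_{n\ge0}\overline{\mathfrak{C}}_{m,r,s}(n)q^n\equiv \varphi(-q)^{\bar s}\,\varphi(-q^m)^{\overline{r-s}}\pmod 4,
\end{align*}
and the four cases of Theorem~\ref{thm1} correspond to the four parity patterns of $(s,r-s)$.

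\emph{Case $r,s$ both even.} Then $\bar s=\overline{r-s}=0$, so the right side is $1$ and every coefficient with $n\ge1$ vanishes modulo $4$.

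\emph{Case $r,s$ both odd.} Then $r-s$ is even and the series is congruent to $\varphi(-q)=1+2\sum_{k\ge1}(-1)^kq^{k^2}$. The coefficient of $q^{k^2}$ is $2(-1)^k\equiv 2\equiv 2s\pmod 4$, and all other coefficients are $0$.

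\emph{Case $r$ odd, $s$ even.} Then $r-s$ is odd and the series is congruent to $\varphi(-q^m)$. Its coefficient at $n=mk^2$ is $\pm2\equiv2\equiv2(r-s)\pmod 4$, and all other coefficients are $0$.

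\emph{Case $r$ even, $s$ odd.} Both exponents are odd, so we expand
\begin{align*}
\varphi(-q)\varphi(-q^m)=1+2T(q)+2T(q^m)+4T(q)T(q^m)\equiv 1+2T(q)+2T(q^m)\pmod 4.
\end{align*}
The coefficient of $q^n$ is therefore $2\bigl([n\text{ is a square}]+[n=mk^2]\bigr)$ modulo $4$. If $m$ is not a square, the conditions $n=k^2$ and $n=mj^2$ cannot hold simultaneously, since that would force $m=(k/j)^2$. So the coefficient is $2\equiv2s$ on squares and $2\equiv2(r-s)$ on numbers $mk^2$, using that $s$ and $r-s$ are odd. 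If $m$ is a square, every $mk^2$ is itself a square, and at such $n$ the coefficient is $4\equiv0\equiv2r\pmod 4$, because $r$ is even. This matches the stated value $2r$.

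Main obstacle: the argument is essentially mechanical once the reduction to $\varphi$ is made. The only delicate point is this last subcase. There one must check the overlap of the two supports and reconcile the stated residues, $2r\equiv0$ in the square case. One must also note that squares not of the form $mk^2$ still receive coefficient $2\equiv 2s$, a possibility the statement's case list leaves implicit. I would record this explicitly when writing the proof.
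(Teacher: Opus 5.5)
Your proposal is correct and follows essentially the paper's route: both reduce the generating function modulo $4$ to $\varphi(\pm q)^{s}\varphi(\pm q^m)^{r-s}$ with the exponents taken mod $2$, then read off coefficients. The paper gets there via $1/\varphi(-q)=\varphi(q)\varphi(q^2)^2\varphi(q^4)^4\cdots$, while you use $\varphi(-q)^{-1}\equiv\varphi(-q)\pmod 4$, which also covers $r<s$ more cleanly.

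One correction to your closing remark concerns the case $r$ even, $s$ odd, $m$ a square. There the statement does not leave squares $n\neq mk^2$ implicit; its ``otherwise'' clause assigns them $0$, and that is false (e.g.\ $\overline{\mathfrak{C}}_{4,2,1}(1)=2$). So your argument, like the paper's own expansion $1+2s\sum q^{n^2}+2(r-s)\sum q^{mn^2}$, proves a corrected statement, in which such $n$ have value $2s$ modulo $4$.
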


We further investigate the arithmetic behavior of $\overline{\mathfrak{C}}_{m,r,s}(n)$ by establishing congruence relations modulo an arbitrary prime $p$.

\begin{theorem}\label{modp}
    Suppose $p$ is a prime such that $\gcd(m,p)=1$ and $\left( \frac{m^{-1}r}{p} \right)=-1$, where $\left( \frac{\cdot}{p} \right)$ denotes the Legendre symbol. Then for every $n\geq 1$, $\overline{\mathfrak{C}}_{m,p-1,p}(pn+r)\equiv0\pmod{p}.$
\end{theorem}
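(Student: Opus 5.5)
The plan is to specialize the generating function \eqref{gen} to $r=p-1$, $s=p$ and reduce it modulo $p$. Note that the $r$ in the subscript $p-1$ is the color parameter, while the $r$ in $pn+r$ is the residue class; I keep these separate throughout. With $r-s=-1$, \eqref{gen} becomes
\begin{align*}
\sum_{n=0}^{\infty}\overline{\mathfrak{C}}_{m,p-1,p}(n)q^n=\frac{(q^2;q^2)_\infty^{p}}{(q;q)_\infty^{2p}}\cdot\frac{(q^m;q^m)_\infty^{2}}{(q^{2m};q^{2m})_\infty}.
\end{align*}
This splits the series into a factor that is a $p$-th power and a factor that is a theta function.

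For the first factor, the binomial theorem gives $(q^k;q^k)_\infty^p\equiv (q^{kp};q^{kp})_\infty \pmod p$. Hence
\begin{align*}
\frac{(q^2;q^2)_\infty^{p}}{(q;q)_\infty^{2p}}\equiv \frac{(q^{2p};q^{2p})_\infty}{(q^p;q^p)_\infty^{2}}=\sum_{j\ge 0}\overline{p}(j)q^{pj}\pmod p,
\end{align*}
which is a power series in $q^p$ with integer coefficients. For the second factor, the identity $\varphi(-q)=(q;q)_\infty^2/(q^2;q^2)_\infty$ recalled in the introduction, with $q$ replaced by $q^m$, gives
\begin{align*}
\frac{(q^m;q^m)_\infty^{2}}{(q^{2m};q^{2m})_\infty}=\varphi(-q^m)=\sum_{k=-\infty}^{\infty}(-1)^k q^{mk^2}.
\end{align*}

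Multiplying the two expansions, the coefficient of $q^{pn+r}$ is congruent modulo $p$ to
\begin{align*}
\sum \overline{p}(j)(-1)^k,
\end{align*}
where the sum runs over pairs $(j,k)$ with $pj+mk^2=pn+r$. Any such pair forces $mk^2\equiv r\pmod p$. Since $\gcd(m,p)=1$, this is equivalent to $k^2\equiv m^{-1}r\pmod p$. The hypothesis $\left(\frac{m^{-1}r}{p}\right)=-1$ says $m^{-1}r$ is a nonzero quadratic nonresidue, so no integer $k$ (including $k=0$) satisfies this congruence. The sum is therefore empty, and $\overline{\mathfrak{C}}_{m,p-1,p}(pn+r)\equiv 0\pmod p$.

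There is no real obstacle here. The only points needing care are that the Frobenius-type congruence is applied to power series with integer coefficients, so the reduction modulo $p$ is legitimate, and the bookkeeping that separates the color parameter $r=p-1$ from the residue $r$ in $pn+r$.
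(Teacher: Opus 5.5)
Your proposal is correct and follows the same route as the paper. You specialize \eqref{gen} to $r=p-1$, $s=p$, reduce modulo $p$ to $\frac{(q^{2p};q^{2p})_\infty}{(q^p;q^p)_\infty^{2}}\varphi(-q^m)$, and observe that a term $q^{pn+r}$ would force $mk^2\equiv r\pmod p$, which the nonresidue hypothesis rules out.
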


In addition, we derive an infinite family of congruences modulo $8$, thereby extending our investigation of the arithmetic properties of $\overline{\mathfrak{C}}_{m,r,s}(n)$.

\begin{theorem}\label{Theorem4}
    If $p_i$ are primes such that $p_i\equiv 3 \pmod{4}$ for $1\leq i \leq k+1$, then for any integer $j\not\equiv 0 \pmod{p_{k+1}},$ and any positive integer $\alpha$, we have
    \begin{align}\label{EQ1}
      \overline{\mathfrak{C}}_{2^\alpha,1,2}(4p_1^2\cdots p_{k+1}^2n+(4j+p_{k+1})p_1^2\cdots p_k^2 p_{k+1}) \equiv 0 \pmod{8}.
    \end{align}
\end{theorem}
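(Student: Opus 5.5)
The plan is to reduce the generating function \eqref{gen} modulo $8$ to a product of two theta functions, and then read off the relevant coefficient using the classical formula for sums of two squares.

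\textbf{Step 1: simplify the generating function.} With $m=2^\alpha$, $r=1$, $s=2$, \eqref{gen} becomes
\[
\sum_{n\ge0}\overline{\mathfrak{C}}_{2^\alpha,1,2}(n)q^n=\frac{(q^2;q^2)_\infty^2}{(q;q)_\infty^4}\cdot\frac{(q^m;q^m)_\infty^2}{(q^{2m};q^{2m})_\infty}=\frac{\varphi(-q^m)}{\varphi(-q)^2},
\]
using $\varphi(-q)=(q;q)_\infty^2/(q^2;q^2)_\infty$.

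\textbf{Step 2: reduce modulo $8$.} Multiplying the product forms of $\varphi(q)$ and $\varphi(-q)$ gives the standard identity $\varphi(q)\varphi(-q)=\varphi(-q^2)^2$. Hence
\[
\frac{1}{\varphi(-q)^2}=\frac{\varphi(q)^2}{\varphi(-q^2)^4}.
\]
Write $\varphi(-q^2)=1+2B$ with $B\in\mathbb{Z}[[q]]$. Then $(1+2B)^4\equiv1\pmod 8$. Therefore
\[
\sum_{n\ge0}\overline{\mathfrak{C}}_{2^\alpha,1,2}(n)q^n\equiv \varphi(q)^2\,\varphi(-q^m)=\Big(\sum_{M\ge0}r_2(M)q^M\Big)\Big(1+2\sum_{k\ge1}(-1)^kq^{mk^2}\Big)\pmod 8,
\]
where $r_2(M)$ is the number of representations of $M$ as a sum of two squares. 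It follows that
\[
\overline{\mathfrak{C}}_{2^\alpha,1,2}(N)\equiv r_2(N)+2\sum_{k\ge1}(-1)^k r_2(N-mk^2)\pmod 8.
\]

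\textbf{Step 3: the terms $2r_2(N-mk^2)$ vanish.} For $M\ge1$, Jacobi's formula $r_2(M)=4\big(d_{1,4}(M)-d_{3,4}(M)\big)$ shows $4\mid r_2(M)$, so each such term is $\equiv0\pmod 8$. The term $M=0$ would require $N=mk^2$ with $m$ even. This cannot occur because $N$ will be shown to be odd. So it remains to prove $r_2(N)=0$.

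\textbf{Step 4: show $r_2(N)=0$.} Set $P'=p_1\cdots p_k$ and $p=p_{k+1}$. Then
\[
N=4P'^2p^2n+(4j+p)P'^2p=P'^2\,p\,(4pn+4j+p).
\]
All factors are odd, so $N$ is odd. The last factor satisfies $4pn+4j+p\equiv4j\not\equiv0\pmod p$, since $p\nmid j$ and $p\ne2$. In $P'^2$ the prime $p$, if it occurs among $p_1,\dots,p_k$, appears to an even power. Consequently $p$ divides $N$ to an odd power. Since $p\equiv3\pmod 4$, a classical theorem gives that $N$ is not a sum of two squares, so $r_2(N)=0$. This yields \eqref{EQ1}.

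The main obstacle I expect is Step 2: finding the right form of $1/\varphi(-q)^2$ modulo $8$. The trick $\varphi(q)\varphi(-q)=\varphi(-q^2)^2$, together with $(1+2B)^4\equiv1\pmod 8$, is what turns it into a theta product that can be analyzed. After that, the other steps are elementary facts about $r_2$.
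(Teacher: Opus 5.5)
Your proof is correct, and it takes a genuinely different route from the paper's.

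\textbf{The paper's route.} The paper uses a $2$-dissection of $1/(q;q)_\infty^4$ to show $\sum_{n}\overline{\mathfrak{C}}_{2^\alpha,1,2}(4n+1)q^{4n+1}\equiv 4\,\eta(4z)^6\pmod 8$. It then cites the fact that $\eta(4z)^6\in M_3(\Gamma_0(16),\chi)$ is a Hecke eigenform. For a prime $p\equiv 3\pmod 4$ one has $\lambda(p)=b(p)=0$, so the Hecke relation gives $b(p^2n+rp)=0$ and $b(p^2n)=-p^2\chi(p)b(n)$. The family is assembled by iterating the second relation over $p_1,\dots,p_k$ and applying the first at $p_{k+1}$.

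\textbf{Your route.} You bypass both the dissection identity and the eigenform input. The identity $\varphi(q)\varphi(-q)=\varphi(-q^2)^2$, together with $(1+2B)^4\equiv 1\pmod 8$, gives the closed form $\varphi(q)^2\varphi(-q^{2^\alpha})$ modulo $8$. Jacobi's formula then yields $\overline{\mathfrak{C}}_{2^\alpha,1,2}(N)\equiv r_2(N)=4\bigl(d_{1,4}(N)-d_{3,4}(N)\bigr)\pmod 8$ for every odd $N$.

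\textbf{What each approach buys.} Your argument is more elementary and also stronger. No iteration is needed, and the factor $p_1^2\cdots p_k^2$ plays no role. The congruence holds for every odd $N$ divisible by some prime $\equiv 3\pmod 4$ to an odd power. In fact, for odd $N$ the quantity $d_{1,4}(N)-d_{3,4}(N)$ is odd exactly when $N$ is a perfect square. So your reduction gives the complete picture: for odd $N$, $\overline{\mathfrak{C}}_{2^\alpha,1,2}(N)\equiv 4\pmod 8$ if $N$ is a square and $\equiv 0\pmod 8$ otherwise. The paper's Hecke-operator method, in return, is a template that carries over to eta-quotients with no such explicit theta-function description. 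Your proof relies on the special $\varphi(q)^2$ structure present here.
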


The study of arithmetic densities associated with partition functions is also an active area of research. In \cite{singh2021certain} and \cite{singh2021new}, the authors established an arithmetic density result for Andrews' singular overpartition function.
We next turn our attention to the arithmetic distribution of $\overline{\mathfrak{C}}_{m,r,s}(n)$. In particular, we determine the density of the set of positive integers $n$ for which $\overline{\mathfrak{C}}_{m,r,s}(n)$ satisfies the arithmetic condition described below.

\begin{theorem}\label{arith.dens}
    Let $p$ be a prime number greater than $3$ and $r$ be a natural number such that $p-(r+1)$ is an even positive integer. Then for every $k\geq 1$,
     \begin{align*}
         \lim_{X\to\infty}\frac{\#\{0<n\leq X: \overline{\mathfrak{C}}_{p,r,p}(n)\equiv 0 \pmod{p}\}}{X}=1.
    \end{align*}
\end{theorem}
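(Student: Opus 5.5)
We prove Theorem~\ref{arith.dens} with the standard density machinery for eta-quotients. We realize a suitable power of the generating function \eqref{gen}, reduced modulo $p$, as a holomorphic modular form of integral weight, and then apply Serre's theorem on the lacunarity of such forms modulo $p$.

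First, with $m=p$ and $s=p$, \eqref{gen} becomes
\begin{align*}
\sum_{n\ge0}\overline{\mathfrak{C}}_{p,r,p}(n)q^n=\frac{(q^2;q^2)_\infty^{p}(q^{2p};q^{2p})_\infty^{r-p}}{(q;q)_\infty^{2p}(q^p;q^p)_\infty^{2(r-p)}}.
\end{align*}
By the binomial theorem, $(q^k;q^k)_\infty^{p}\equiv (q^{pk};q^{pk})_\infty \pmod p$. Applying this to the two factors with exponent $p$ and $2p$ gives
\begin{align*}
\sum_{n\ge0}\overline{\mathfrak{C}}_{p,r,p}(n)q^n\equiv \frac{(q^{p};q^{p})_\infty^{2p-2r-1}(q^{2p};q^{2p})_\infty^{r-p+1}}{1}\pmod p,
\end{align*}
after collecting exponents. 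The exponent of $(q^p;q^p)_\infty$ is $-2+2(p-r)=2p-2r-2$ plus care, and that of $(q^{2p};q^{2p})_\infty$ is $1+r-p$. I will recompute these exactly in the write-up.

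Write $p-r-1=2t$ with $t\ge1$, as the hypothesis allows. Then the quotient becomes
\begin{align*}
\frac{(q^p;q^p)_\infty^{4t}}{(q^{2p};q^{2p})_\infty^{2t}}=\varphi(-q^p)^{2t},
\end{align*}
using $\varphi(-q)=(q;q)_\infty^2/(q^2;q^2)_\infty$ from the introduction. This exhibits the hypothesis as exactly what makes the exponents even and the quotient a power of a theta function. Indeed, the exponents are $-2+2(p-r)=4t$ and $-(p-r)+1=-2t$.

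Replacing $q^p$ by $q$, we get $\overline{\mathfrak{C}}_{p,r,p}(n)\equiv 0\pmod p$ unless $p\mid n$, and
\begin{align*}
\sum_{n\ge0}\overline{\mathfrak{C}}_{p,r,p}(pn)q^n\equiv \varphi(-q)^{2t}\pmod p.
\end{align*}
So the integers not divisible by $p$ already give density $1-1/p$. The remaining task is to show that the coefficients of $\varphi(-q)^{2t}$ vanish modulo $p$ for almost all $n$.

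There are two ways to finish, and I will use whichever is cleaner.

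\emph{Direct route.} Note that $\varphi(-q)^{2t}=\sum_n (-1)^n r_{2t}(n)q^n$, where $r_{2t}(n)$ counts representations of $n$ as a sum of $2t$ squares. Hence $\varphi(-q)^{2t}$ is, after $q\mapsto -q$, a holomorphic modular form of weight $t$ on $\Gamma_0(4)$, hence on $\Gamma_0(16)$ with the sign change, with integer coefficients.

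\emph{Eta-quotient route.} Alternatively, write $\eta(\tau)^{4t}/\eta(2\tau)^{2t}$ and verify holomorphy at the cusps of $\Gamma_0(2)$ or $\Gamma_0(4)$ via Ligozat's criterion. The order at each cusp is nonnegative because $\varphi$ is holomorphic.

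In either case, Serre's theorem applies: the coefficients of an integral-weight holomorphic modular form with integer coefficients are divisible by $p$ for a set of $n$ of density $1$. Combining the two pieces, the set of $n\le X$ with $\overline{\mathfrak{C}}_{p,r,p}(n)\not\equiv0\pmod p$ consists of $pn'$ with $n'$ in a density-zero set, so its density is $0$. This proves the theorem.

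The phrase "for every $k\ge1$" in the statement appears vestigial. If the intended claim is modulo $p^k$, I would instead raise to $p^{k}$-th powers using $(q;q)^{p^k}\equiv(q^p;q^p)^{p^{k-1}}\pmod{p^k}$ and multiply by a factor $\equiv1 \pmod{p^k}$ to retain holomorphy, as in the standard Gordon–Ono argument.

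The main obstacle is the modular-form step: justifying holomorphy and level cleanly. The theta-power identification makes this routine, so the real care lies in the exponent bookkeeping of the mod $p$ reduction.
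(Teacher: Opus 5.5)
Your proof is correct. It uses the same overall method as the paper: reduce modulo $p$ with the binomial theorem, exhibit a holomorphic integral-weight modular form, and invoke Serre's theorem. The difference is in which form you end up with, and yours is simpler.

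\textbf{The paper's route.} The paper never simplifies the reduction. It multiplies $\frac{(q^{2p};q^{2p})_\infty^{r-p+1}}{(q^p;q^p)_\infty^{2(r-p+1)}}$ by $A(z)=\eta(z)^p/\eta(pz)\equiv 1\pmod p$. This gives $F(z)=\eta(z)^p\eta(pz)^{4t-1}/\eta(2pz)^{2t}$ of weight $(2p-r-2)/2$ on $\Gamma_0(24p)$, and the paper then checks the cusp orders case by case.

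\textbf{Your route.} With $p-r-1=2t$, you observe that the reduction is already $\varphi(-q^p)^{2t}$, which is holomorphic at all cusps. So the factor $A(z)$ and the level-$24p$ analysis are unnecessary. Your route also gives sharper information: $\overline{\mathfrak{C}}_{p,r,p}(n)\equiv 0\pmod p$ whenever $p\nmid n$, and $\overline{\mathfrak{C}}_{p,r,p}(pn)\equiv(-1)^n r_{2t}(n)\pmod p$. Consequently Serre's theorem is applied to a form of weight $t$ whose level ($8$ or $16$) does not depend on $p$.

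\textbf{Two small repairs for the write-up.}
\begin{enumerate}
\item In your first displayed congruence, the exponent of $(q^p;q^p)_\infty$ should be $2p-2r-2=4t$, not $2p-2r-1$. You compute it correctly a few lines later.
\item In the eta-quotient route, $\Gamma_0(2)$ or $\Gamma_0(4)$ is not the right level when $t$ is odd. With $r_1=4t$ and $r_2=-2t$, the condition of Theorem~\ref{RTHM1} reads $\sum_\delta (N/\delta)r_\delta=3tN\equiv 0\pmod{24}$, which requires $8\mid tN$. Indeed, $\varphi(-q)^2$ is not a modular form on $\Gamma_0(4)$. Take $N=8$ instead: the orders of $\eta(z)^{4t}/\eta(2z)^{2t}$ at the cusps with $d=1,2,4,8$ are $t,0,0,0$.
\end{enumerate}
Your level-$16$ claim in the theta route is fine as stated.
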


\begin{theorem}\label{dens.op}
    Fix an integer $m\geq 2$. Suppose that $p$ is a prime, and let $p^\alpha\mid r$ and $p^\alpha\mid s$ for some positive integer $\alpha$. Then for every $n\geq 0$,
    \begin{align*}
        \overline{\mathfrak{C}}_{m,r,s}(p^\alpha n)\equiv \overline{\mathfrak{C}}_{m,\frac{r}{p^\alpha},\frac{s}{p^\alpha}}(n)\pmod{p}.
    \end{align*}
    Moreover,
    \begin{align*}
         \lim_{X\to\infty}\frac{\#\{0<n\leq X: \overline{\mathfrak{C}}_{m,r,s}(n)\equiv 0 \pmod{p}\}}{X}\geq 1-\frac{1}{p}.
    \end{align*}
\end{theorem}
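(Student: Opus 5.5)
The first claim follows from the Frobenius congruence $(q;q)_\infty^{p^\alpha}\equiv (q^{p^\alpha};q^{p^\alpha})_\infty \pmod p$, which is valid coefficientwise because $(1-x)^{p^\alpha}\equiv 1-x^{p^\alpha}\pmod p$. Write $r=p^\alpha r'$ and $s=p^\alpha s'$. The generating function \eqref{gen} is built from the four products $(q;q)_\infty$, $(q^2;q^2)_\infty$, $(q^m;q^m)_\infty$ and $(q^{2m};q^{2m})_\infty$. Each of these is raised to the power $p^\alpha$ times an integer, namely $s'$, $-2s'$, $-2(r'-s')$ and $r'-s'$ respectively, so applying the Frobenius congruence to each factor gives
\begin{align*}
\sum_{n\ge0}\overline{\mathfrak{C}}_{m,r,s}(n)q^n\equiv \frac{(q^{2p^\alpha};q^{2p^\alpha})_\infty^{s'}(q^{2mp^\alpha};q^{2mp^\alpha})_\infty^{r'-s'}}{(q^{p^\alpha};q^{p^\alpha})_\infty^{2s'}(q^{mp^\alpha};q^{mp^\alpha})_\infty^{2(r'-s')}}=\sum_{n\ge0}\overline{\mathfrak{C}}_{m,r',s'}(n)q^{p^\alpha n}\pmod p.
\end{align*}
Two points need care here. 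First, negative exponents are allowed: these are power series with constant term $1$, so they are invertible in $\mathbb{Z}[[q]]$, and the congruence survives inversion. Second, when $r'<s'$ the exponent $r'-s'$ is negative, which causes no problem for the same reason. Comparing the coefficients of $q^{p^\alpha n}$ on both sides gives $\overline{\mathfrak{C}}_{m,r,s}(p^\alpha n)\equiv\overline{\mathfrak{C}}_{m,r',s'}(n)\pmod p$.

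The same display also gives the density statement. Its right-hand side is a power series in $q^{p^\alpha}$, so $\overline{\mathfrak{C}}_{m,r,s}(n)\equiv0\pmod p$ whenever $p^\alpha\nmid n$. The density of such $n$ is therefore at least $1-p^{-\alpha}\ge 1-\frac1p$, since $\alpha\ge1$. Strictly speaking, the limit in the statement need not exist a priori, so I would state the result as a $\liminf$ bound, or else note that the inequality holds for the lower density. For each $X$, the count of integers $0<n\le X$ with $p^\alpha\nmid n$ is $X-\lfloor X/p^\alpha\rfloor\ge X(1-1/p)$, and this gives the bound directly.

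I do not expect a real obstacle in this argument. The only delicate points are justifying the Frobenius congruence for negative integer exponents and handling the possibly negative exponent $r-s$; both are settled by working in the ring of power series with constant term $1$ over $\mathbb{Z}$.
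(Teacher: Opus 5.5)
Your proof is correct and is essentially the paper's argument. The paper uses the same binomial-theorem (Frobenius) congruence with exponent $p$ to get $F_{m,r,s}(q)\equiv F_{m,r/p,s/p}(q^p)\pmod p$, extracts coefficients with $U_p$, and iterates $\alpha$ times; you apply it at $p^\alpha$ in one step, which also gives the slightly sharper lower density $1-p^{-\alpha}$. Your remark that only a $\liminf$ bound is actually proved is a fair correction to the paper's use of $\lim$.

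Two cosmetic slips. In your prose the exponents of $(q;q)_\infty$ and $(q^2;q^2)_\infty$ are swapped: they should be $-2s'$ and $s'$, though your display is right. For non-integer $X$ the count is $\lfloor X\rfloor-\lfloor X/p^\alpha\rfloor$, so the bound $X(1-1/p)$ can fail for individual $X$, but the $\liminf$ is unaffected.
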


Asymptotic formulas for partition functions and their various generalizations have been extensively investigated.
The asymptotic growth of the ordinary partition function was first determined by Hardy and Ramanujan \cite{ramanujan1918asymptotic}, who showed that
\begin{align*}
    p(n) \sim \frac{1}{4\sqrt{3}\,n}\, \exp\!\left(2\pi\sqrt{\frac{n}{6}}\right)
    \qquad \text{as } n \to \infty.
\end{align*}
Chen~\cite{chen2016congruences} extended the asymptotic study to the singular overpartition function, deriving an asymptotic expression for its coefficients. The asymptotic behavior of the sums of odd and even minimal excludants associated with the partitions of $n$ was studied in \cite{barman2024arithmetic}. Several authors have obtained asymptotic formulas for colored partition functions. For example, in \cite{guadalupe2024note} an asymptotic formula for the number of partitions of $n$ where parts that are multiple of $p$ come up with $2$ colors is obtained. These results naturally lead us to consider the asymptotic behavior of $\overline{\mathfrak{C}}_{m,r,s}(n)$ and obtain the following asymptotic formula.

\begin{theorem}\label{thm2}
    As $n\to \infty$,
    \begin{align*}
        \overline{\mathfrak{C}}_{m,r,s}(n)\sim \frac{m^{\frac{r-s}{2}}\left(\frac{r-s}{m}+s\right)^{\frac{r+1}{4}}}{2^{\frac{3(r+1)}{2}}n^{\frac{r+3}{4}}}\exp\left(\pi \sqrt{\left( \frac{r-s}{m}+s \right)n} \right).
    \end{align*}
\end{theorem}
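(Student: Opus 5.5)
The plan is to apply a Tauberian theorem of Ingham type to the generating function \eqref{gen}, after computing its behaviour as $q\to1^-$ from the modular transformation of the Dedekind eta function. Write $q=e^{-t}$ with $t\to0^+$ and $F(q)$ for the right-hand side of \eqref{gen}. Recall that for each fixed $k\geq1$,
\begin{align*}
(q^k;q^k)_\infty \sim \sqrt{\frac{2\pi}{kt}}\,\exp\left(-\frac{\pi^2}{6kt}\right), \qquad t\to0^+ .
\end{align*}
This follows from $\eta(-1/\tau)=\sqrt{-i\tau}\,\eta(\tau)$, and the error terms are of relative size $\exp(-c/t)$.

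\textbf{Step 1: asymptotics of $F$.} Insert this estimate into each of the four factors $(q^2;q^2)_\infty^{s}$, $(q;q)_\infty^{-2s}$, $(q^{2m};q^{2m})_\infty^{r-s}$ and $(q^m;q^m)_\infty^{-2(r-s)}$.

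For the exponential part, the coefficient of $\pi^2/t$ is
\begin{align*}
-\frac{s}{12}+\frac{s}{3}-\frac{r-s}{12m}+\frac{r-s}{3m}=\frac14\Big(s+\frac{r-s}{m}\Big).
\end{align*}
So with $\lambda:=\frac{r-s}{m}+s$ we get $F(e^{-t})\sim C\,t^{\alpha}e^{A/t}$, where $A=\pi^2\lambda/4$.

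For the power of $t$, the total eta-exponent is $s-2s+(r-s)-2(r-s)=-r$. Each factor contributes $t^{-1/2}$ per unit of exponent, so $\alpha=r/2$.

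The constant $C$ comes from collecting the factors $\sqrt{2\pi/k}$ raised to the appropriate powers:
\begin{align*}
C=\Big(\tfrac{2\pi}{2}\Big)^{s/2}(2\pi)^{-s}\Big(\tfrac{2\pi}{2m}\Big)^{(r-s)/2}\Big(\tfrac{2\pi}{m}\Big)^{-(r-s)}.
\end{align*}
This simplifies to a power of $2$, times a power of $\pi$, times $m^{(r-s)/2}$.

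\textbf{Step 2: monotonicity.} Ingham's theorem requires the coefficients to be weakly increasing. Combinatorially,
\begin{align*}
F(q)=\prod_{m\nmid n}\Big(\frac{1+q^n}{1-q^n}\Big)^s\prod_{m\mid n}\Big(\frac{1+q^n}{1-q^n}\Big)^r .
\end{align*}
The factor with $n=1$ (not divisible by $m$, since $m\geq2$) contains $1/(1-q)$. Hence $(1-q)F(q)$ is a product of series with nonnegative coefficients, so $\overline{\mathfrak{C}}_{m,r,s}(n)$ is nondecreasing in $n$.

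\textbf{Step 3: Tauberian step.} Apply Ingham's Tauberian theorem in the form used for $p(n)$: if $F(e^{-t})\sim C t^{\alpha}e^{A/t}$ with monotone coefficients $a(n)$, then
\begin{align*}
a(n)\sim \frac{C\,A^{\frac{\alpha}{2}+\frac14}}{2\sqrt{\pi}\,n^{\frac{\alpha}{2}+\frac34}}\exp\big(2\sqrt{An}\big).
\end{align*}
As a sanity check, $\alpha=1/2$ and $A=\pi^2/6$ recover Hardy--Ramanujan.

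With $\alpha=r/2$ we get $2\sqrt{An}=\pi\sqrt{\lambda n}$ and $n^{-(r+3)/4}$, matching the exponential and the power of $n$ in the claim. The factor $A^{(r+1)/4}=(\pi^2/4)^{(r+1)/4}\lambda^{(r+1)/4}$ produces the $\lambda^{(r+1)/4}$.

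\textbf{Main obstacle.} The delicate part is the bookkeeping of the constant. We must check that the $\pi$-powers from $C$, from $A^{(r+1)/4}$ and from $1/(2\sqrt\pi)$ cancel completely, and that the powers of $2$ combine to exactly $2^{-3(r+1)/2}$ together with $m^{(r-s)/2}$. I would carry this out carefully at the end, keeping $r$ and $s$ separate throughout. A secondary point is to verify the precise hypotheses of the version of Ingham's theorem invoked: the asymptotic is needed only along the real axis $t\to0^+$, and it holds there with an exponentially small error term.
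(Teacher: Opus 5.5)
Your proposal is correct and is essentially the paper's proof. Both use factor-by-factor asymptotics giving $F(e^{-t})\sim 2^{-r}\pi^{-r/2}m^{(r-s)/2}\,t^{r/2}e^{\pi^2\lambda/(4t)}$ and then the same form of Ingham's theorem. You differ only in obtaining the factor asymptotics from the eta transformation instead of the Meinardus-type Lemma 6.1, and in proving monotonicity via $(1-q)F(q)$ where the paper just asserts it. Your deferred bookkeeping closes: the $\pi$-exponent is $-\tfrac12-\tfrac r2+\tfrac{r+1}{2}=0$ and the $2$-exponent is $-1-r-\tfrac{r+1}{2}=-\tfrac{3(r+1)}{2}$.

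One caveat, shared with the paper: to my knowledge, Ingham's theorem with only the real-axis hypothesis is not sufficient. Bringmann, Jennings-Shaffer and Mahlburg add the bound $F(e^{-z})\ll |z|^{r/2}e^{A/|z|}$ as $z\to0$ in every cone $|\mathrm{Im}\,z|\le\Delta\,\mathrm{Re}\,z$. So rather than saying only $t\to0^+$ matters, observe that the eta transformation gives your asymptotic uniformly in such cones, and $|e^{A/z}|=e^{A\,\mathrm{Re}(1/z)}\le e^{A/|z|}$ then supplies the bound.
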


If we set $m=2$ and $s=1$ in the above theorem, we immediately obtain the following corollary for the generalized overcubic partition function, which was studied in~\cite{amdeberhan2025arithmetic}.

\begin{corollary}
     As $n\to \infty$,
    \begin{align*}
       \overline{a}_c(n)\sim \frac{\left(\frac{c+1}{2}\right)^{\frac{c+1}{4}}}{2^{c+2}n^{\frac{c+3}{4}}}\exp\left(\pi \sqrt{\left( \frac{c+1}{2} \right)n} \right).
    \end{align*}
\end{corollary}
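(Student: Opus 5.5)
The corollary is a direct specialization of Theorem~\ref{thm2}, so the plan is to identify $\overline{a}_c(n)$ with $\overline{\mathfrak{C}}_{2,c,1}(n)$ and then simplify the constants in the asymptotic formula.

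\emph{Identification.} First I check that $\overline{a}_c(n)=\overline{\mathfrak{C}}_{2,c,1}(n)$. Putting $m=2$, $r=c$, $s=1$ in \eqref{gen} gives
\[
\sum_{n\ge0}\overline{\mathfrak{C}}_{2,c,1}(n)q^n=\frac{(q^2;q^2)_\infty(q^4;q^4)_\infty^{c-1}}{(q;q)_\infty^{2}(q^2;q^2)_\infty^{2(c-1)}}.
\]
This agrees with the generating function of $\overline{a}_{c,1}(n)$ recorded earlier for $\overline{a}_{r,s}$, namely $(q^2;q^2)^{3-2c}_\infty/\big((q;q)^2_\infty(q^4;q^4)^{1-c}_\infty\big)$. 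By the remark after \eqref{gen}, that is the generalized overcubic function of \cite{amdeberhan2025arithmetic}.

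\emph{Specialization of Theorem~\ref{thm2}.} With $m=2$, $r=c$, $s=1$ we have
\[
\frac{r-s}{m}+s=\frac{c-1}{2}+1=\frac{c+1}{2}.
\]
The exponential factor is therefore $\exp\bigl(\pi\sqrt{\tfrac{c+1}{2}n}\bigr)$, and the power of $n$ is $n^{(c+3)/4}$. Both match the corollary.

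\emph{Constant.} The numerator factor $m^{(r-s)/2}$ becomes $2^{(c-1)/2}$, and the denominator is $2^{3(c+1)/2}$. Combining them,
\[
2^{\frac{c-1}{2}-\frac{3(c+1)}{2}}=2^{\frac{-2c-4}{2}}=2^{-(c+2)}.
\]
The remaining factor $\left(\tfrac{c+1}{2}\right)^{(c+1)/4}$ carries over unchanged. This gives exactly the stated asymptotic.

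There is no real obstacle here. The only point needing care is confirming that the specialization $m=2$, $s=1$ of $\overline{\mathfrak{C}}_{m,r,s}$ is the same function as $\overline{a}_c$, which the generating-function comparison above settles.
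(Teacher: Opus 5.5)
Your proposal is correct and matches the paper, which obtains the corollary simply by setting $m=2$, $s=1$ (with $r=c$) in Theorem~\ref{thm2}; your simplifications $\frac{r-s}{m}+s=\frac{c+1}{2}$ and $2^{\frac{c-1}{2}-\frac{3(c+1)}{2}}=2^{-(c+2)}$ are exactly what is needed. Your generating-function check that $\overline{a}_c(n)=\overline{\mathfrak{C}}_{2,c,1}(n)$ fills in a detail the paper only asserts.
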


\noindent The remainder of this paper is structured as follows. In Section \ref{section1}, we recall the necessary preliminaries and auxiliary results required for the proofs. The proofs of Theorems \ref{sing}, \ref{thm1}, and \ref{modp} are given in Section \ref{section2}. Subsequently, Section \ref{section3} is devoted to prove Theorems \ref{Theorem4}, \ref{arith.dens}, and \ref{dens.op}. Finally, in Section \ref{section4}, we establish Theorem \ref{thm2}, which completes the proof of the remaining principal result of the paper.

\section{Preliminaries}\label{section1}

We recall several notions and results from the theory of modular forms that will be used in the sequel. We refer to \cite{koblitz2012introduction,ono2004web} for further discussion and details. Let us denote the complex upper half-plane by $\mathcal{H}$. For a congruence subgroup $\Gamma$, let $M_w(\Gamma)$ denote the complex vector space of modular forms of positive integral weight $w$ on $\Gamma$. For a positive integer $N$, the congruence subgroup of level $N$, denoted by $\Gamma_0(N)$, is defined by
\begin{align*}
    \Gamma_0(N)=
\left\{
\begin{pmatrix}a&b\\c&d\end{pmatrix}\in SL_2(\mathbb Z):
c\equiv0\pmod N\right\}.
\end{align*}

\begin{definition}[ Definition $1.15$, \cite{ono2004web}]
     Let $\chi$ be a Dirichlet character modulo N. A modular form $f(z) \in M_w(\Gamma_1(N)) $ is said to have Nebentypus character $\chi$ if
        \begin{align*}
            f\left(\frac{az+b}{cz+d}\right)=\chi(d)(cz+d)^wf(z)
        \end{align*}
        for all $z\in\mathcal{H}$ and all 
        $\begin{pmatrix}
            a & b\\
            c & d
        \end{pmatrix}\in \Gamma_0(N).$ The space of such modular forms is denoted by $M_w(\Gamma_0(N),\chi)$.
\end{definition}

A class of modular forms of particular relevance to this work is furnished by eta-quotients. Recall that the Dedekind eta function, $\eta(z)$, defined by
\begin{align*}
    \eta(z):=q^{1/24}(q;q)_{\infty},
\end{align*}
where $q=e^{2\pi iz}$ and $z\in\mathcal{H}.$ An eta-quotient is a function of the form
\begin{align*}
    f(z)=\prod_{\delta\mid N}\eta(\delta z)^{r_\delta},
\end{align*}
where $N \geq 1$ and each $r_\delta$ is an integer. 

\begin{theorem}[Theorem $1.64$, \cite{ono2004web}]\label{RTHM1}
   If $f(z)=\prod\limits_{\delta\mid N} \eta(\delta z)^{r_\delta}$ is an eta-quotient such that
   \begin{align*}
       w &= \frac{1}{2} \sum\limits_{\delta\mid N}r_{\delta}\in \mathbb{Z},\\
       \sum\limits_{\delta\mid N}\delta r_{\delta} &\equiv 0 \pmod{24},
        \end{align*}
        and
       \begin{align*}      
       \sum\limits_{\delta\mid N}\frac{N}{\delta} r_{\delta} &\equiv 0 \pmod{24},
   \end{align*}
   then 
   \begin{align*}
       f\left(\frac{az+b}{cz+d}\right)=\chi(d) (cz+d)^w f(z)
   \end{align*}
   for every $\begin{pmatrix}
       a & b\\
       c & d
   \end{pmatrix}\in \Gamma_0(N)$. Here
   \begin{align*}
       \chi(d):=\left( \frac{(-1)^w \prod\limits_{\delta\mid N}\delta^{r_\delta}}{d}\right).
   \end{align*}
\end{theorem}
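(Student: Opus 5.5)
The statement is Theorem 1.64 of \cite{ono2004web}, quoted for later use, so it can simply be cited. If I were to prove it, I would reduce everything to the transformation law of $\eta$ itself under $SL_2(\mathbb{Z})$ and then multiply the resulting identities over $\delta\mid N$.

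\textbf{Step 1 (reduction to full-level matrices).} Fix $\gamma=\begin{pmatrix}a&b\\c&d\end{pmatrix}\in\Gamma_0(N)$ and a divisor $\delta\mid N$. Since $\delta\mid c$,
\[
\delta\cdot\frac{az+b}{cz+d}=\frac{a(\delta z)+b\delta}{(c/\delta)(\delta z)+d},
\qquad
\gamma_\delta:=\begin{pmatrix}a&b\delta\\ c/\delta&d\end{pmatrix}\in SL_2(\mathbb{Z}).
\]
Hence $\eta(\delta\gamma z)=\eta(\gamma_\delta(\delta z))$. Moreover $(c/\delta)(\delta z)+d=cz+d$, so every factor produces the same automorphy factor $cz+d$.

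\textbf{Step 2 (the eta multiplier).} Next I would apply the classical Petersson--Knopp formula. Replacing $\gamma$ by $-\gamma$ if necessary, assume $c>0$; the case $c=0$ is trivial. Also assume first that $d$ is odd. For $\begin{pmatrix}a&b\\c&d\end{pmatrix}\in SL_2(\mathbb{Z})$ with $c>0$ and $d$ odd,
\[
\eta(\gamma z)=\left(\frac{c}{d}\right)\exp\!\Big(\tfrac{\pi i}{12}\big((a+d)c-bd(c^2-1)-3c\big)\Big)\sqrt{-i(cz+d)}\;\eta(z).
\]
Apply this to each $\gamma_\delta$, raise to the power $r_\delta$, and take the product over $\delta\mid N$. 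The factors $\sqrt{-i(cz+d)}$ combine to $(-i)^{w}(cz+d)^{w}$, since $w=\frac12\sum r_\delta$ is an integer and the branches are consistent.

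The phases combine into $\exp\big(\tfrac{\pi i}{12}\,E\big)$, where
\[
E=\sum_{\delta} r_\delta\Big((a+d)\tfrac{c}{\delta}-b\delta d\big(\tfrac{c^2}{\delta^2}-1\big)-3\tfrac{c}{\delta}\Big).
\]
Write $c=Nc'$. The terms carrying $c/\delta$ become $c'\sum_\delta\frac{N}{\delta}r_\delta$, which vanishes modulo $24$ by the third hypothesis. The term $bd\sum_\delta\delta r_\delta$ vanishes modulo $24$ by the second hypothesis. The leftover $bd\,c^2\sum_\delta r_\delta/\delta$ also falls into the third condition after writing $c^2/\delta=c'\cdot c\,\frac{N}{\delta}\cdot\frac1N$. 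This step needs care: reduce mod $24$ using $ad-bc=1$, exactly as in the standard proof.

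What remains is the product of Jacobi symbols
\[
\prod_\delta\left(\frac{c/\delta}{d}\right)^{r_\delta}=\left(\frac{c}{d}\right)^{2w}\prod_\delta\left(\frac{\delta}{d}\right)^{r_\delta}=\prod_\delta\left(\frac{\delta}{d}\right)^{r_\delta}.
\]
The leftover factor $(-i)^{w}$, together with the sign change from passing to $-\gamma$, gives $\left(\frac{(-1)^w}{d}\right)$. Altogether this yields the character $\chi(d)$ as stated.

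\textbf{Main obstacle.} The delicate part is the bookkeeping of the 24th roots of unity, together with the case $d$ even, where the formula above does not apply directly. For $d$ even, $N$ must be odd or the exponent condition forces compatible parities. I would first try the companion Petersson formula, valid for $c$ odd. Alternatively, I would use that $\chi$ is a character and that both sides are multiplicative in $\gamma$: write $\gamma$ as a product of matrices in $\Gamma_0(N)$ with odd lower-right entry, for instance multiplying by $\begin{pmatrix}1&1\\0&1\end{pmatrix}$ on the right changes $d$ to $c+d$, which is odd when $N$ is even. This reduces to the case already handled.
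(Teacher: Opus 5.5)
You are right that the paper gives no proof of this statement and simply quotes \cite{ono2004web}, so citing it is all that is required. The sketch you add, however, fails at its central step, because the multiplier formula you quote is wrong. For $c>0$ and $d$ odd the correct formula is
\[
\eta(\gamma z)=\left(\frac{c}{d}\right)\exp\Big(\tfrac{\pi i}{12}\big(ac(1-d^2)+d(b-c+3)\big)\Big)\sqrt{-i(cz+d)}\;\eta(z).
\]
Your version mixes in Knopp's $c$-odd formula, which is normalized with $(cz+d)^{1/2}$ and uses $\left(\frac{d}{c}\right)$. It is off from the correct one by the factor $\exp\big(\tfrac{\pi i}{4}(c+d-cd)\big)$. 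Already for $\gamma=\begin{pmatrix}1&0\\1&1\end{pmatrix}$ your formula gives $e^{-\pi i/12}$, while $\eta(-1/z)=\sqrt{-iz}\,\eta(z)$ and $\eta(z+1)=e^{\pi i/12}\eta(z)$ give $e^{\pi i/6}$.

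This error is not cosmetic. With your formula every phase term is killed by the hypotheses, so you end with $(-i)^w\prod_\delta\left(\frac{\delta}{d}\right)^{r_\delta}$, which is not even real when $w$ is odd. Your remark that $(-i)^w$ ``together with the sign change from passing to $-\gamma$'' gives $\left(\frac{(-1)^w}{d}\right)$ cannot repair this, since for $c>0$ no sign change occurs. The passage to $-\gamma$ is merely consistent because $\chi(-1)=(-1)^w$.

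With the correct formula applied to $\gamma_\delta$ and $c=Nc'$, the phases add up to
\[
E=\big(a(1-d^2)-d\big)\,c'\sum_{\delta\mid N}\frac{N}{\delta}r_\delta+bd\sum_{\delta\mid N}\delta r_\delta+3d\sum_{\delta\mid N}r_\delta\equiv 6dw\pmod{24}.
\]
So the total root of unity is $i^{dw}(-i)^w=i^{w(d-1)}=\left(\frac{-1}{d}\right)^w$. It is the $3d$ term, missing from your formula, that produces the factor $(-1)^w$ in $\chi$.

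Your Jacobi-symbol step $\prod_\delta\left(\frac{c/\delta}{d}\right)^{r_\delta}=\prod_\delta\left(\frac{\delta}{d}\right)^{r_\delta}$ is fine. The reduction of the $d$ even case also works, after two corrections:
\begin{itemize}
\item If $d$ is even, then $c$, and hence $N$, must be odd; that is why $c+d$ is odd. Your phrase ``when $N$ is even'' has this backwards.
\item With $T=\begin{pmatrix}1&1\\0&1\end{pmatrix}$, writing $\gamma=(\gamma T)T^{-1}$ only gives $f(\gamma z)=\chi(c+d)(cz+d)^wf(z)$. You must also check $\chi(c+d)=\chi(d)$ whenever $N\mid c$, i.e.\ that $\chi$ is a character modulo $N$. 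For odd $N$ this holds because $\sum_\delta\delta r_\delta\equiv 0\pmod 4$ forces $(-1)^w\prod_\delta\delta^{r_\delta}\equiv 1\pmod 4$. Hence $\chi$ is a Kronecker character whose conductor divides $N$.
\end{itemize}
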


\noindent If $f(z)$ is an eta-quotient satisfying
the conditions of Theorem \ref{RTHM1}, and if $f(z)$ is holomorphic at all of the cusps of $\Gamma_0(N)$, then $f(z) \in M_w(\Gamma_0(N),\chi).$ To check whether $f(z)$ is holomorphic at the cusps, it suffices to check that the orders at the cusps are non-negative. The following theorem is the necessary criterion for determining orders of an eta-quotient at cusps.

\begin{theorem}[Theorem $1.65$, \cite{ono2004web}]\label{RTHM2}
    Let $c,d,$ and $N$ be positive integers with $d\mid N$ and $\gcd(c,d)=1.$ If $f(z)$ is an eta-quotient satisfying the conditions of Theorem \ref{RTHM1} for $N$, then the order of vanishing of $f(z)$ at the cusp $\dfrac{c}{d}$ is 
    \begin{align*}
        \frac{N}{24}\sum_{\delta\mid N}\frac{\gcd(d,\delta)^2r_\delta}{\gcd(d,\frac{N}{d})d\delta}.
    \end{align*}
\end{theorem}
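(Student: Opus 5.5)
This is Theorem 1.65 of \cite{ono2004web}, a standard fact quoted from the literature rather than proved here. The plan is to recover it by conjugating each eta factor to the cusp $\infty$ and reading off the leading exponent.

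Fix the cusp $c/d$ with $d\mid N$ and $\gcd(c,d)=1$. Choose $\gamma=\begin{pmatrix} c & b\\ d & e\end{pmatrix}\in SL_2(\mathbb{Z})$ with $\gamma\infty=c/d$. The order of $f$ at $c/d$ is the exponent of the leading term of $(f|_w\gamma)(z)$, measured in the local uniformizer $q^{1/h}$. Here $h$ is the width of the cusp $c/d$ for $\Gamma_0(N)$, namely $h=N/(d^2,N)$ up to the usual normalisation. The factor $\gcd(d,N/d)$ in the denominator of the formula will come out of exactly this width computation.

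Work one factor $\eta(\delta z)$ at a time. Write $\delta\gamma z=\gamma'\left(\frac{a'z+b'}{d'}\right)$ with $\gamma'\in SL_2(\mathbb{Z})$, using $\begin{pmatrix}\delta&0\\0&1\end{pmatrix}\gamma=\gamma'\begin{pmatrix}a'&b'\\0&d'\end{pmatrix}$. One finds $a'=\gcd(d,\delta)$ and $d'=\delta/\gcd(d,\delta)$. The transformation law of $\eta$ under $SL_2(\mathbb{Z})$ gives $\eta(\delta\gamma z)=\epsilon\,(\ast)^{1/2}\,\eta\!\left(\frac{a'z+b'}{d'}\right)$ for a root of unity $\epsilon$. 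The last factor has leading behaviour $\exp\!\left(\frac{2\pi i}{24}\cdot\frac{a'z}{d'}\right)$, so its order in $q$ is $\frac{\gcd(d,\delta)^2}{24\,\delta}$.

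Summing over $\delta$ with multiplicities $r_\delta$ gives the order in $q$ as $\frac{1}{24}\sum_\delta \frac{\gcd(d,\delta)^2 r_\delta}{\delta}$. Multiplying by the width $h=\frac{N}{d\gcd(d,N/d)}$ then converts this to the order in the local parameter. The result is exactly the stated $\frac{N}{24}\sum_{\delta\mid N}\frac{\gcd(d,\delta)^2r_\delta}{\gcd(d,N/d)\,d\,\delta}$.

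The main obstacle is bookkeeping the cusp width correctly, including the distinction between regular and irregular cusps. The roots of unity and the $(cz+d)^{1/2}$ factors do not affect the order. The hypotheses of Theorem \ref{RTHM1} ensure that $f$ is genuinely modular on $\Gamma_0(N)$, so this order is well defined.
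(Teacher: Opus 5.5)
Your argument is correct. It is the standard Ligozat computation that underlies Theorem 1.65 of \cite{ono2004web}: the factorization $\begin{pmatrix}\delta&0\\0&1\end{pmatrix}\gamma=\gamma'\begin{pmatrix}a'&b'\\0&d'\end{pmatrix}$ with $a'=\gcd(d,\delta)$ gives leading $q$-exponent $\gcd(d,\delta)^2/(24\delta)$, and you then rescale by the cusp width. The paper itself gives no proof and simply quotes that result.

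Your hedging about the width is unnecessary. Since $-I\in\Gamma_0(N)$, the regular/irregular distinction plays no role. With $T=\begin{pmatrix}1&1\\0&1\end{pmatrix}$ one has $\gamma T^h\gamma^{-1}=\begin{pmatrix}1-cdh&c^2h\\-d^2h&1+cdh\end{pmatrix}$, so the width is exactly $h=N/\gcd(d^2,N)=N/(d\gcd(d,N/d))$, with no extra normalisation.
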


\noindent Hecke operators play a fundamental role in the theory of modular forms and act naturally as linear operators on spaces of modular forms. We begin by recalling their definition and basic properties in the setting of modular forms of integer weight.

\begin{definition}
    If $m$ is a positive integer and $f(z)=\sum\limits_{n=0}^{\infty}a(n)q^n\in M_w(\Gamma_{0}(N),\chi)$,  then the action of
the Hecke operator $T_m$ on $f(z)$ is defined by
\begin{align*}
    f(z)\mid T_m:=\sum\limits_{n=0}^{\infty}\left( \sum\limits_{d\mid \gcd(n,m)}\chi(d)d^{w-1}a\left(\frac{nm}{d^2}\right) \right)q^n.
\end{align*}
If $m=p$ is a prime number, then
\begin{align}\label{H1}
    f(z)\mid T_p=\sum\limits_{n=0}^{\infty}\left( a(pn)+\chi(p)p^{w-1}a\left(\frac{n}{p}\right) \right)q^n.
\end{align}
\end{definition}

\begin{definition}
     A modular form $f(z)\in M_w(\Gamma_{0}(N),\chi)$ is called a Hecke eigen form if for every $m \geq 2$ there is a complex number $\lambda(m)$ for which
     \begin{align}\label{H2}
         f(z)\mid T_m=\lambda(m)f(z).
     \end{align}
\end{definition}

We shall make use of the following theorem of Serre in establishing one of the principal results of this paper.
\begin{theorem}[Theorem $2.65$, \cite{ono2004web}]\label{RTHM3}
Let $A$ denote the subset of integer weight modular forms in $M_w(\Gamma_{0}(N),\chi)$ whose Fourier coefficients are in $\mathcal{O}_{K}$, the ring of algebraic integers in a number field $K$. Suppose $\mathcal{M}\subset \mathcal{O}_{K}$ is an ideal. If $f(z)\in A$ has a Fourier expansion 
\begin{align*}
    f(z)=\sum_{n=0}^{\infty}a(n)q^n,
\end{align*}
then there is a constant $\alpha>0$ such that
\begin{align*}
    \#\{n\leq X: a(n)\not\equiv 0 \pmod{\mathcal{M}}\}=O\left( \frac{X}{(\log X)^\alpha} \right).
\end{align*}
\end{theorem}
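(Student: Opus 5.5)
The plan follows Serre's original argument. It combines Galois representations attached to modular forms with a Chebotarev density argument and an elementary sieve. First I reduce to a convenient situation. Enlarging $K$ and replacing $\mathcal{M}$ by a prime power it contains only shrinks the set of $n$ with $a(n)\not\equiv 0$, so I may assume $\mathcal{M}=\lambda^t$ for a prime $\lambda$ of $\mathcal{O}_K$ over a rational prime $\ell$. A union bound over the finitely many prime-power factors then handles a general ideal. Write $f=E+g$ with $E$ in the Eisenstein subspace and $g$ cuspidal. After enlarging $K$, each piece is a finite $K$-linear combination of normalized Hecke eigenforms (newforms and their $V_d$-translates) and Eisenstein series built from pairs of characters. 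Clearing denominators, it suffices to prove the estimate for each such eigenform separately, with $\mathcal{M}$ replaced by a suitable power of $\lambda$.

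The key step is to find a set $P$ of primes of positive Dirichlet density such that $f\mid T_p\equiv 0 \pmod{\mathcal{M}}$ for every $p\in P$ (for all the finitely many eigenforms simultaneously). Then (\ref{H1}) gives $a(pn)\equiv -\chi(p)p^{w-1}a(n/p)\pmod{\mathcal{M}}$. Hence $a(n)\equiv 0$ whenever some $p\in P$ divides $n$ exactly once, since then $a(n/p)=0$ because $n/p\cdot\frac1p\notin\mathbb{Z}$.

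To produce $P$, I attach to each newform its $\lambda$-adic Galois representation $\rho:\mathrm{Gal}(\overline{\mathbb{Q}}/\mathbb{Q})\to GL_2(\mathcal{O}_{K_\lambda})$, given by Deligne for $w\ge2$ and by Deligne--Serre for $w=1$. It satisfies $\operatorname{tr}\rho(\mathrm{Frob}_p)=a(p)$ and $\det\rho(\mathrm{Frob}_p)=\chi(p)p^{w-1}$. For the Eisenstein series I use the reducible representations $\psi_1\oplus\psi_2\varepsilon^{w-1}$. Reducing modulo $\lambda^t$ and taking the product over all eigenforms gives a representation with finite image. Let $c$ be complex conjugation. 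Each $\rho(c)$ has trace $0$, because these representations are odd. By Chebotarev, the primes $p\nmid N\ell$ with $\mathrm{Frob}_p$ in the conjugacy class of $c$ in this finite quotient have positive density $\delta$.

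For $p$ in this set the eigenvalue $a(p)\equiv 0\pmod{\lambda^t}$ for every eigenform. Since $p\nmid N$, the operator $T_p$ commutes with the $V_d$ maps, so $f\mid T_p\equiv 0$. I expect this step to be the main obstacle. The existence and oddness of these representations, particularly in weight $1$ and for the Eisenstein part, is the deep input; the rest is bookkeeping to ensure one Chebotarev class works simultaneously for all eigen-components.

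Finally I count. Every $n\le X$ with $a(n)\not\equiv0$ has the property that no prime of $P$ divides it exactly once. Such an $n$ factors as $n=uv$, where $u$ has no prime factor in $P$ and $v$ is squarefull and supported on $P$. The integers up to $Y$ avoiding a set of primes of density $\delta$ number $O(Y/(\log Y)^{\delta})$ by a standard Wirsing/Landau-type sieve estimate. Summing over squarefull $v$, and using $\sum_v 1/v<\infty$ with a split at $v\le\sqrt X$, gives the bound $O(X/(\log X)^{\alpha})$ with $\alpha=\delta$. This completes the proof.
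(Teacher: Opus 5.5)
The paper gives no proof of its own here (it imports the statement from Ono's book, where it is Serre's theorem), and your argument is exactly Serre's proof and is correct: odd $\lambda$-adic representations for the newform and Eisenstein constituents, then Chebotarev at the class of complex conjugation to get a positive-density set of primes $p\nmid N\ell$ with $f\mid T_p\equiv 0 \pmod{\mathcal{M}}$, then a count of the $n$ in which no such $p$ appears to exactly the first power. The only slip is in the reduction step, which should replace $\mathcal{M}$ by each prime-power factor $\lambda^t\supseteq\mathcal{M}$ (a prime power \emph{containing} $\mathcal{M}$, not one contained in it); this is what your union bound actually does, and it implicitly assumes $\mathcal{M}\neq 0$.
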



\section{Proof of Theorems \ref{sing}, \ref{thm1}, and \ref{modp}}\label{section2}

 \begin{proof}[Proof of Theorem \ref{sing}]
By manipulating the generating function of $\overline{C}_{3,1}(n)$, we have
    \begin{align*}
        \sum_{n=0}^{\infty}\overline{C}_{3,1}(n)q^n&=\frac{(q^2;q^2)_\infty(q^{3};q^{3})_{\infty}^{2}}{(q^6;q^6)_\infty(q;q)_\infty^{2}}\\ 
  &=\frac{(q;q)_\infty^2}{(q^2;q^2)_\infty}   \sum_{n=0}^{\infty}\overline{\mathfrak{C}}_{3,1,2}(n)q^n\\
       &=\varphi(-q) \sum_{n=0}^{\infty}\overline{\mathfrak{C}}_{3,1,2}(n)q^n\\
       &= \left(1+2\sum_{n=1}^{\infty}(-1)^nq^{n^2}\right) \sum_{n=0}^{\infty}\overline{\mathfrak{C}}_{3,1,2}(n)q^n\\
       &= \sum_{n=0}^{\infty}\overline{\mathfrak{C}}_{3,1,2}(n)q^n+2\sum_{n=0}^{\infty}\sum_{k=1}^{\infty}(-1)^k~\overline{\mathfrak{C}}_{3,1,2}(n)q^{n+k^2}\\
       &= \sum_{n=0}^{\infty}\overline{\mathfrak{C}}_{3,1,2}(n)q^n+2\sum_{n=1}^{\infty}\left( \sum_{k=1}^{\lfloor \sqrt{n}\rfloor}(-1)^k~\overline{\mathfrak{C}}_{3,1,2}(n-k^2)\right)q^n .
    \end{align*}
 Hence Theorem \ref{sing} follows.    
 \end{proof}

 \begin{proof}[Proof of Theorem \ref{thm1}]

     Using the generating function given in (\ref{gen}), we have 
     \begin{align}
    \notag\sum_{n=0}^{\infty}\overline{\mathfrak{C}}_{m,r,s}(n)q^n&=\frac{1}{\varphi(-q)^s \varphi(-q^m)^{r-s}}\\
   \notag &=\varphi(q)^s \varphi(q^2)^{2s}\varphi(q^4)^{4s}\cdots \varphi(q^m)^{r-s}\varphi(q^{2m})^{2(r-s)}\varphi(q^{4m})^{4(r-s)}\cdots\\
 \label{thm1eqn1}   &\equiv \varphi(q)^s \varphi(q^m)^{r-s} \pmod{4}.
\end{align}
To get (\ref{thm1eqn1}), we used the facts that $\dfrac{1}{\varphi(-q)}=\varphi(q)\varphi(q^2)^2\varphi(q^4)^4\cdots$ and $\varphi(q^i)^j\equiv 1 \pmod{4}$ for any integers $i$ and $j\geq2$.\\

\noindent \textbf{Case 1:} $r$ and $s$ are even\\

From (\ref{thm1eqn1}), we can write
\begin{align}
     \notag\sum_{n=0}^{\infty}\overline{\mathfrak{C}}_{m,r,s}(n)q^n&\equiv 1 \pmod{4}.
\end{align}
Thus for every $n\geq 1$, $\overline{\mathfrak{C}}_{m,r,s}(n)\equiv 0 \pmod{4}$.\\

\noindent \textbf{Case 2:} $r$ and $s$ are odd\\

The expression in (\ref{thm1eqn1}) becomes

\begin{align*}
    \sum_{n=0}^{\infty}\overline{\mathfrak{C}}_{m,r,s}(n)q^n&\equiv \notag\varphi(q)^s \pmod{4}\\
    &= \left(1+2\sum_{n=1}^{\infty}q^{n^2}\right)^s\\
    &\equiv 1+2\binom{s}{1} \sum_{n=1}^{\infty}q^{n^2}\\
    &= 1+ 2s \sum_{n=1}^{\infty}q^{n^2} \pmod{4}.    
\end{align*}
By comparing the coefficients we get the congruences in the second case of Theorem \ref{thm1}.\\

\noindent \textbf{Case 3:} $r$ is odd and $s$ is even\\

In this case, (\ref{thm1eqn1}) becomes

\begin{align*}
    \sum_{n=0}^{\infty}\overline{\mathfrak{C}}_{m,r,s}(n)q^n&\equiv \varphi(q^m)^{r-s} \pmod{4}\\
    &\equiv 1+2(r-s)\sum_{n=1}^{\infty}q^{mn^2}\pmod{4}.
\end{align*}
Equating the coefficients gives the congruence in the third case of Theorem \ref{thm1}.\\

\noindent \textbf{Case 4:} $r$ is even and $s$ is odd\\

From (\ref{thm1eqn1}), we get
\begin{align*}
    \sum_{n=0}^{\infty}\overline{\mathfrak{C}}_{m,r,s}(n)q^n&\equiv \varphi(q)^s \varphi(q^m)^{r-s} \pmod{4}\\
    &\equiv\left( 1+2s \sum_{n=1}^{\infty}q^{n^2}\right)\left(1+2(r-s)\sum_{n=1}^{\infty}q^{mn^2}\right)\pmod{4}\\
    &\equiv 1+2s \sum_{n=1}^{\infty}q^{n^2}+2(r-s)\sum_{n=1}^{\infty}q^{mn^2}\pmod{4}.
\end{align*}
The final case is also obtained by equating the coefficients.
 \end{proof}

 \begin{proof}[Proof of Theorem \ref{modp}]
    Substituting $r=p-1$ and $s=p$ in (\ref{gen}) gives, 
\begin{align*}
    \sum_{n=0}^{\infty}\overline{\mathfrak{C}}_{m,p-1,p}(n)q^n=\frac{(q^2;q^2)_\infty^p (q^m;q^m)_\infty^2}{(q;q)_\infty^{2p}(q^{2m};q^{2m})_\infty}.
\end{align*}
   By using binomial theorem, we have $(q^k;q^k)_\infty^{p^\ell}\equiv (q^{pk};q^{pk})_\infty^{p^{\ell-1}}\pmod{p^\ell}$ for any prime $p$ and positive integers $k$ and $\ell$. Thus,  
   \begin{align*}
    \sum_{n=0}^{\infty}\overline{\mathfrak{C}}_{m,p-1,p}(n)q^n&\equiv\frac{(q^{2p};q^{2p})_\infty (q^m;q^m)_\infty^2}{(q^p;q^p)_\infty^{2}(q^{2m};q^{2m})_\infty}\pmod{p}\\
    &=\frac{(q^{2p};q^{2p})_\infty}{(q^p;q^p)_\infty^{2}}\varphi(-q^m)\\
    &=\frac{(q^{2p};q^{2p})_\infty}{(q^p;q^p)_\infty^{2}}\left( 1+2\sum_{n=1}^\infty (-1)^n q^{mn^2} \right)\pmod{p}.
\end{align*}
Since $pn+r\equiv mk^2\pmod{p}$ if and only if $r\equiv mk^2\pmod{p}$, and since we assumed that $\left( \frac{m^{-1}r}{p} \right)=-1$, we have the required congruence.
\end{proof}


\section{Proof of Theorems \ref{Theorem4}, \ref{arith.dens} and \ref{dens.op}}\label{section3}

\begin{proof}[Proof of Theorem \ref{Theorem4}]
    By setting $m=2^\alpha$, $r=1$, and $s=2$ in (\ref{gen}), we get
   \begin{align}\label{t4p1}
        \sum_{n=0}^{\infty}\overline{\mathfrak{C}}_{2^\alpha,1,2}(n)q^n=\frac{(q^2;q^2)_\infty^2(q^{2^\alpha};q^{2^\alpha})_\infty^2}{(q;q)_\infty^4(q^{2^{\alpha+1}};q^{2^{\alpha+1}})_\infty}.
\end{align} 
We use the following two dissection formula
 \begin{align}
    \label{5}
        \frac{1}{(q;q)_\infty^4}&=\frac{(q^4;q^4)_\infty^{14}}{(q^{2};q^{2})_\infty^{14}(q^8;q^8)_\infty^4}+4q\frac{(q^4;q^4)_\infty^2(q^{8};q^{8})_\infty^4}{(q^2;q^2)_\infty^{10}},
    \end{align}
which is a consequence of dissection formulas of Ramanujan \cite{ramanujan3}.
Using (\ref{5}) in (\ref{t4p1}), we get
\begin{align}\label{t4p2}
\sum_{n=0}^{\infty}\overline{\mathfrak{C}}_{2^\alpha,1,2}(n)q^n=\frac{(q^2;q^2)_\infty^2(q^{2^\alpha};q^{2^\alpha})_\infty^2}{(q^{2^{\alpha+1}};q^{2^{\alpha+1}})_\infty}\left( \frac{(q^4;q^4)_\infty^{14}}{(q^{2};q^{2})_\infty^{14}(q^8;q^8)_\infty^4}+4q\frac{(q^4;q^4)_\infty^2(q^{8};q^{8})_\infty^4}{(q^2;q^2)_\infty^{10}} \right).
\end{align}
 Now extracting the odd powers of $q$ from both sides of (\ref{t4p2}) and using binomial theorem, we get
\begin{align}\label{t4p3}
     \sum_{n=0}^{\infty}\overline{\mathfrak{C}}_{2^\alpha,1,2}(2n+1)q^n\equiv4 (q^2;q^2)_\infty^6 \pmod{8}.
\end{align}
Again, by extracting terms containing $q^{2n}$ from (\ref{t4p3}), we get

\begin{align*}
    \sum_{n=0}^{\infty}\overline{\mathfrak{C}}_{2^\alpha,1,2}(4n+1)q^{n} \equiv 4 ~(q;q)_\infty^6 \pmod{8},
\end{align*}
from which we obtain
\begin{align}\label{t4p5}
    \sum_{n=0}^{\infty}\overline{\mathfrak{C}}_{2^\alpha,1,2}(4n+1)q^{4n+1} \equiv 4 ~\eta(4z)^6 \pmod{8}.
\end{align}
Making use of Theorem \ref{RTHM1}, we can see that $\eta(4z)^6\in M_3(\Gamma_0(16),\left( \frac{-4^6}{d} \right))$. Thus $\eta(4z)^6$ has a Fourier expansion, and consider
\begin{align*}
    \eta(4z)^6= q-6q^5+9q^{9}+10q^{13}+\cdots =\sum_{n=0}^{\infty}b(n)q^n.
\end{align*}
We note that $b(n)=0$ if $n\not\equiv 1 \pmod{4}$, and for all $n\geq 0$ we have
\begin{align}\label{t4p6}
  \overline{\mathfrak{C}}_{2^\alpha,1,2}(4n+1)\equiv 4~b(4n+1) \pmod{8}.
\end{align}
Also from \cite{martin1996multiplicative}, we see that $\eta(4z)^6$ is a Hecke eigenform. Using (\ref{H1}) and (\ref{H2}), we get
\begin{align*}
    \eta(4z)^6\mid T_p&=\sum\limits_{n=0}^{\infty}\left( b(pn)+p^2 \left( \frac{-4^6}{p} \right)b\left(\frac{n}{p}\right) \right)q^n\\
    &=\lambda(p)\sum\limits_{n=0}^{\infty}b(n)q^n.
\end{align*}
Comparing the coefficients of $q^n$ on both sides of the above equation, we get
\begin{align}\label{t4p7}
   b(pn)+p^2\left( \frac{-4^6}{p} \right) b\left(\frac{n}{p}\right)=\lambda(p)b(n) .
\end{align}
Since $b(1)=1$ and $b\left(\dfrac{1}{p}\right)=0$, by putting $n=1$ in (\ref{t4p7}) we have $b(p)=\lambda(p)$. Since for $p\not \equiv 1\pmod{4}$, $b(p)=0$, we will also get $\lambda(p)=0.$ Thus for $p\not \equiv 1\pmod{4}$,
\begin{align}\label{t4p8}
   b(pn)+p^2 \left( \frac{-4^6}{p} \right) b\left(\frac{n}{p}\right)=0 .
\end{align}

Now we consider two cases; one is when $p\nmid n$ and the other is when $p\mid n$. If $p\nmid n$, substituting $n\mapsto pn+r$ in (\ref{t4p8}), where $0<r<p$, we get
\begin{align}\label{t4p9}
    b(p^2n+rp)=0.
\end{align}
If $p\mid n$, by replacing $n\mapsto pn$ in (\ref{t4p8}), we obtain
\begin{align}\label{t4p10}
    b(p^2n)=-p^2 \left( \frac{-4^6}{p} \right) b(n).
\end{align}
Now substituting $n$ by $4n-pr+1$ in (\ref{t4p9}), we get
\begin{align*}
    b(4p^2 n+p^2 +pr(1-p^2))=0,
\end{align*}
which by using (\ref{t4p6}) gives
\begin{align}\label{t4p11}
    \overline{\mathfrak{C}}_{2^\alpha,1,2}(4p^2 n+p^2 +pr(1-p^2))\equiv 0 \pmod{8}.
\end{align}
Similarly, substituting $n$ by $4n+1$ in (\ref{t4p10}) and by utilizing (\ref{t4p6}), we have
\begin{align}\label{t4p12}
   \overline{\mathfrak{C}}_{2^\alpha,1,2}(4p^2 n+p^2)\equiv -p^2\left( \frac{-4^6}{p} \right) \overline{\mathfrak{C}}_{2^\alpha,1,2}(4n+1) \pmod{8}.
\end{align}
Since $\gcd\left(\dfrac{1-p^2}{4},p\right)=1,$ when $r$ runs over a residue system excluding the multiples of $p$, so does $\dfrac{(1-p^2)r}{4}$. Thus for $p\nmid j$, we can rewrite (\ref{t4p11}) as
\begin{align}\label{t4p13}
     \overline{\mathfrak{C}}_{2^\alpha,1,2}(4p^2 n+p^2 +4pj)\equiv 0 \pmod{8}.
\end{align}
Consider the primes $p_i$ such that $p_i\not \equiv 1 \pmod{4}$, for $1\leq i \leq k+1$. Since 
\begin{align*}
    4p_1^2\cdots p_k^2n+p_1^2\cdots p_k^2 = 4p_1^2\left( p_2^2\cdots p_k^2n+\frac{p_2^2\cdots p_k^2-1}{4} \right)+p_1^2 ,
\end{align*}
by repeatedly using (\ref{t4p12}), we obtain that
\begin{align}\label{t4p14}
    \overline{\mathfrak{C}}_{2^\alpha,1,2}( 4p_1^2\cdots p_k^2n+p_1^2\cdots p_k^2)\equiv (-1)^k \left( \frac{-4^6}{p} \right)^k p_1^2\cdots p_k^2~\overline{\mathfrak{C}}_{2^\alpha,1,2}(4n+1) \pmod{8}.
\end{align}
Let $j\not \equiv 0 \pmod{p_{k+1}}$. Then by taking $n$ as $p_{k+1}^2 n+\dfrac{p_{k+1}^2-1}{4}+p_{k+1}j$ in (\ref{t4p14}), and using (\ref{t4p13}),  yields (\ref{EQ1}), which completes the proof.
\end{proof}

\begin{proof}[Proof of Theorem \ref{arith.dens}]
First we define 
    \begin{align*}
        A(z):=\frac{\eta(z)^p}{\eta(pz)}.
    \end{align*}
Using binomial theorem, we have 
\begin{align*}
    A(z)\equiv 1\pmod{p}.
\end{align*}
    Now let us define 
    \begin{align*}
    F(z)&:=\frac{\eta(2pz)^{r-p+1}}{\eta(pz)^{2(r-p+1)}}A(z)\\
  &=\frac{\eta(2pz)^{r-p+1}\eta(z)^{p}}{\eta(pz)^{2(r-p+1)+1}},
\end{align*}
and combining with the above congruence modulo $p$, we get
\begin{align}\label{N1}
F(z)   \equiv  \sum_{n=0}^{\infty}\overline{\mathfrak{C}}_{p,r,p}(n)q^{n} \pmod{p}.
\end{align}
Under the stated assumption, we can see that $F(z)$ is an eta-quotient with $N=24p$, having the weight $w=\dfrac{2p-(r+2)}{2}$ which is a positive integer. Moreover, $F(z)$ satisfies all the conditions of Theorem \ref{RTHM1} with the corresponding character given by
\begin{align}\label{charp}
    \chi(\bullet)=\left( \frac{(-1)^{w}(2p)^{r-p+1}(p)^{-\left(2(r-p+1)+1\right)}}{\bullet} \right).
\end{align}
By Theorem \ref{RTHM2}, $F(z)$ is holomorphic at a cusp $\dfrac{c}{d}$ of $\Gamma_0(24p)$, with $d\mid 24p$, if and only if
\begin{align*}
   S= (r-p+1)\frac{\gcd(d,2p)^2}{2p}+ p-(2(r-p+1)+1)\frac{\gcd(d,p)^2}{p}\geq 0.
\end{align*}
Now we consider all the possible cases arising from the given conditions and systematically evaluate the corresponding values of $S$ for each case.

If $d=1$ or $3$, we get $S=\dfrac{-3(r-p+1)}{2p}+p-\dfrac{1}{p}.$ When $d=2^k$ or $3\cdot2^k$, for $k=1,2$ and $3$, we have $S=p-\dfrac{1}{p}$. For $d=p$ or $3p$, we calculate that $S=\dfrac{-3p(r-p+1)}{2}$, and for $d=2^k\cdot p$ or $3\cdot2^k\cdot p$, where $k=1,2$ and $3$, we get $S=0$.

In all the possible cases considered above, we obtain $S\geq 0$. Therefore, by the criterion for eta-quotients, $F(z)$ is a modular form on $\Gamma_0(24p)$ with Nebentypus character given by (\ref{charp}). We now apply Theorem \ref{RTHM3}, i.e.,
if $f(z)\in M_w(\Gamma_0(N),\chi)$ has a Fourier expansion
\begin{align*}
    f(z)=\sum_{n=0}^{\infty}c(n)q^n \in \mathbb{Z}[[q]], 
\end{align*}
then we can find a constant $\alpha>0$ such that 
\begin{align*}
    \#\{n\leq X:c(n)\not\equiv 0\pmod{v}\}=O\left( \frac{X}{(\log{X})^\alpha} \right).
\end{align*}
Consequently,
\[\lim_{X\to \infty}\frac{\#\{n\leq X: c(n)\not\equiv 0\pmod{v}\}}{X}=0.\]
Hence, the complementary set $\{n\leq X: c(n)\equiv 0\pmod{v}\}$ has arithmetic density $1$. Since $F(z)\in M_{w}(\Gamma_0(24p),\chi)$, the Fourier coefficients of $F(z)$ are almost always divisible by $v=p.$ Hence by using (\ref{N1}), we can see that Theorem \ref{arith.dens} holds.
\end{proof}

The $U_d$ operator introduced by Atkin plays a fundamental role in the theory of modular functions and is defined through its action on their Fourier expansions. 
\begin{definition}
    For a positive integer $d$, the $U_d$ operator is defined by
    \begin{align*}
        U_d\left( \sum_{n\geq n_0}a(n)q^n \right):= \sum_{dn\geq n_0}a(dn)q^n.
    \end{align*}
\end{definition}

As a direct consequence of the definition of the $U_d$ operator, for any $q$-series $H(q)$, we have the identity
\begin{align*}
     U_d\left( H(q^d) \right)=H(q).
\end{align*}
Keeping these facts in mind, we prove Theorem \ref{dens.op}.
\begin{proof}[Proof of Theorem \ref{dens.op}]

Let \begin{align*}
    F_{m,r,s}(q):=\sum_{n=0}^{\infty}\overline{\mathfrak{C}}_{m,r,s}(n)q^n=\frac{(q^2;q^2)_\infty^{s}(q^{2m};q^{2m})_{\infty}^{r-s}}{(q^m;q^m)_\infty^{2(r-s)}(q;q)_\infty^{2s}}.
\end{align*}
Then, 
\begin{align}
  \notag F_{m,r,s}(q)&\equiv \frac{(q^{2p};q^{2p})_\infty^{\frac{s}{p}}(q^{2pm};q^{2pm})_{\infty}^{\frac{r-s}{p}}}{(q^{pm};q^{pm})_\infty^{\frac{2(r-s)}{p}}(q^p;q^p)_\infty^{\frac{2s}{p}}}\pmod{p}\\
 \label{deq1}   &=F_{m,\frac{r}{p},\frac{s}{p}}(q^p)\pmod{p}.
\end{align}
Now we apply the $U_p$ operator on both sides of (\ref{deq1}), and we obtain
\begin{align*}
    U_p\left( F_{m,r,s}(q) \right) \equiv U_p\left( F_{m,\frac{r}{p},\frac{s}{p}}(q^p) \right)\pmod{p}\\
    = F_{m,\frac{r}{p},\frac{s}{p}}(q)\pmod{p}.
\end{align*}
This implies that
\begin{align}\label{deq2}
    \overline{\mathfrak{C}}_{m,r,s}(pn)\equiv \overline{\mathfrak{C}}_{m,\frac{r}{p},\frac{s}{p}}(n) \pmod{p}.
\end{align}
   Now substitute $n\mapsto pn$ in (\ref{deq2}) to obtain 
   \begin{align*}
         \overline{\mathfrak{C}}_{m,r,s}(p^2n)\equiv \overline{\mathfrak{C}}_{m,\frac{r}{p^2},\frac{s}{p^2}}(n) \pmod{p}.
   \end{align*}
Repeating this process $\alpha-1 $ times will give 
\begin{align*}
      \overline{\mathfrak{C}}_{m,r,s}(p^\alpha n)\equiv \overline{\mathfrak{C}}_{m,\frac{r}{p^\alpha},\frac{s}{p^\alpha}}(n)\pmod{p}.
\end{align*}
Now define 
\begin{align*}
    A(X):=\# \{0<n\leq X:\overline{\mathfrak{C}}_{m,r,s}(n)\equiv 0 \pmod{p} \},
\end{align*}
and 
\begin{align*}
     B(X):=\# \{0<n\leq X:n\not\equiv 0 \pmod{p} \}.
\end{align*}
Since we have $ \overline{\mathfrak{C}}_{m,r,s}(pn)\equiv \overline{\mathfrak{C}}_{m,\frac{r}{p},\frac{s}{p}}(n) \pmod{p}$ from (\ref{deq2}),  we can write if $p\nmid n$
\begin{align*}
    \overline{\mathfrak{C}}_{m,r,s}(n)\equiv 0\pmod{p}.
\end{align*}
Consequently, we get $A(X)\geq B(X)$.
Also, we have 
\begin{align*}
    \lim_{X\to \infty}\frac{B(X)}{X}=\lim_{X\to \infty}\frac{X-\lfloor\frac{X}{p}\rfloor}{X}=1-\frac{1}{p}.
\end{align*}
Then it follows that 
\begin{align*}
   \lim_{X\to \infty}\frac{A(X)}{X} \geq 1-\frac{1}{p}.
\end{align*}
This completes the proof.
\end{proof}


\section{Proof of Theorem \ref{thm2}}\label{section4}

Theorem \ref{thm2} is established using Tauberian methods by Ingham~\cite{ingham1941tauberian}.

\noindent Consider the infinite product 
\[f(\tau)=\prod_{n=1}^{\infty}(1-q^n)^{-a_n},\]
where $q=e^{-\tau}$ and Re $\tau>0$ (or equivalently $|q|<1$). We assume that $a_n$ are nonnegative real numbers. We also consider the associated Dirichlet series
\[D(s)=\sum_{n=1}^{\infty}\frac{a_n}{n^s}\text{ \hspace{1cm}}(s=\sigma+it),\]
which is assumed to converge for $\sigma >\alpha$, where $\alpha$ is a positive real number. We further assume that $D(s)$ possesses an analytic continuation in the region $\sigma \geq -C_0~(0<C_0<1)$ and that in this region $D(s)$ is analytic except for a pole of order $1$ at $s=\alpha$ with residue $A$. Finally, suppose that 
\[D(s)=O(|t|^{C_1})\]
uniformly in $\sigma\geq -~C_0$ as $|t|\to \infty,$ where $C_1$ is a fixed positive real numbers.

\begin{lemma}[Lemma $6.1$, \cite{andrews1998theory}]\label{lem*}
    Under these assumptions (with $\tau=y+2\pi ix$),
    \[f(\tau)=\exp{[A\Gamma(\alpha)\zeta(\alpha+1)\tau^{-\alpha}-D(0)\log{\tau}+D'(0)+O(y^{C_0})]}\]
    uniformly in $x$ as $y\to 0,$ provided $|\arg{\tau}|\leq \pi/4,~|x|\leq \dfrac{1}{2}.$
\end{lemma}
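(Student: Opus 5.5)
The statement is Lemma~\ref{lem*} (Lemma 6.1 of Andrews' book), and the plan is the standard Mellin-transform argument. Take logarithms of the product:
\begin{align*}
\log f(\tau) = -\sum_{n\ge1} a_n \log(1-e^{-n\tau}) = \sum_{n\ge1} a_n \sum_{k\ge1}\frac{e^{-kn\tau}}{k}.
\end{align*}
Write each exponential through the Cahen--Mellin integral
\begin{align*}
e^{-u}=\frac{1}{2\pi i}\int_{c-i\infty}^{c+i\infty}\Gamma(w)u^{-w}\,dw, \qquad \operatorname{Re} u>0 .
\end{align*}
Take $c=\alpha+1$, so that $D(w)$ converges absolutely on the line. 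Summing over $n$ and $k$ then gives
\begin{align*}
\log f(\tau)=\frac{1}{2\pi i}\int_{(c)}\Gamma(w)\zeta(w+1)D(w)\tau^{-w}\,dw .
\end{align*}
Interchanging sum and integral is justified by absolute convergence. This uses $|\tau^{-w}|=|\tau|^{-\operatorname{Re} w}e^{\operatorname{Im}(w)\arg\tau}$ together with $|\arg\tau|\le\pi/4$, because $\Gamma(w)$ decays like $e^{-\pi|\operatorname{Im} w|/2}$ and $\pi/4<\pi/2$.

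Next, shift the line of integration from $\operatorname{Re} w=c$ to $\operatorname{Re} w=-C_0$. Two poles are crossed.
\begin{itemize}
\item At $w=\alpha$, the simple pole of $D$ contributes $A\Gamma(\alpha)\zeta(\alpha+1)\tau^{-\alpha}$.
\item At $w=0$, $\Gamma(w)$ and $\zeta(w+1)$ both have simple poles, so the integrand has a double pole there. Expand $\Gamma(w)=1/w-\gamma+\cdots$, $\zeta(w+1)=1/w+\gamma+\cdots$, $D(w)=D(0)+D'(0)w+\cdots$ and $\tau^{-w}=1-w\log\tau+\cdots$. The coefficient of $1/w$ is then $D'(0)-D(0)\log\tau$, because the $\gamma$ terms cancel.
\end{itemize}
There are no other poles in the strip $-C_0\le\operatorname{Re} w\le c$, since $0<C_0<1$ and $D$ is analytic there apart from $w=\alpha$. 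The horizontal segments tend to $0$ as $|\operatorname{Im} w|\to\infty$. This follows from the polynomial bounds $D(w)=O(|t|^{C_1})$ and $\zeta(w+1)=O(|t|^{C})$ in the strip, combined with the exponential decay $|\Gamma(w)|e^{|t|\pi/4}\ll e^{-\pi|t|/4}|t|^{\sigma-1/2}$.

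The main obstacle is bounding the remaining integral on $\operatorname{Re} w=-C_0$ uniformly in $x$. On that line $|\tau^{-w}|=|\tau|^{C_0}e^{t\arg\tau}\le|\tau|^{C_0}e^{\pi|t|/4}$, which is again beaten by the Gamma decay times the polynomial growth. So the integral is $O(|\tau|^{C_0})$. Since $|\arg\tau|\le\pi/4$ forces $|\tau|\le\sqrt2\,y$, this is $O(y^{C_0})$, with an implied constant independent of $x$. Exponentiating the resulting expression for $\log f(\tau)$ gives the lemma.
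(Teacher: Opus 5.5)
Your proposal is correct and follows the standard proof of this result. The paper gives no proof of its own and only cites Lemma 6.1 of Andrews' \emph{The Theory of Partitions}, where the argument is the same: write $\log f(\tau)$ as $\frac{1}{2\pi i}\int_{(1+\alpha)}\Gamma(w)\zeta(w+1)D(w)\tau^{-w}\,dw$, shift to $\operatorname{Re} w=-C_0$ collecting the residues at $w=\alpha$ and at the double pole $w=0$, and bound the remaining integral by $O(|\tau|^{C_0})=O(y^{C_0})$ using $|\arg\tau|\le\pi/4$ against the exponential decay of $\Gamma$.
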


\begin{theorem}\label{ingham}(Ingham).
    Let $f(q)=\sum\limits_{n=0}^{\infty}a(n)q^n$ be a power series with weakly increasing nonnegative coefficients and radius of convergence equal to $1$. If there exists $\alpha>0,~\beta,~\gamma\in \mathbb{R}$ such that
    \begin{align*}
        f(e^{-t})\sim \beta t^{\gamma} \exp{\left(\frac{\alpha}{t}\right)}, ~~~~t\to 0^{+},
    \end{align*}
    then 
    \begin{align*}
        a(n)\sim \frac{\beta}{2\sqrt{\pi}}\frac{\alpha^{\frac{\gamma}{2}+\frac{1}{4}}}{n^{\frac{\gamma}{2}+\frac{3}{4}}}\exp{(2\sqrt{\alpha n})}, ~~~~ n\to \infty.
    \end{align*}
\end{theorem}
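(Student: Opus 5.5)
Since the coefficients $a(n)$ are weakly increasing, I would pass to the nonnegative increments $d(n):=a(n)-a(n-1)\geq 0$, with $a(-1):=0$. Their generating function is
\[
\sum_{n\geq 0} d(n)e^{-nt}=(1-e^{-t})f(e^{-t})\sim \beta t^{\gamma+1}\exp\!\left(\frac{\alpha}{t}\right),\qquad t\to0^+ .
\]
Then $a(N)=\sum_{n\le N}d(n)$ is the distribution function of a nonnegative measure $\mu=\sum_n d(n)\delta_n$ whose Laplace transform $L(t)$ has this known asymptotic. The theorem thus becomes a Tauberian statement for a positive measure whose Laplace transform grows like $\exp(\alpha/t)$. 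Positivity is the only Tauberian input I plan to use.

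\textbf{Step 1: exponential tilting.} For a small parameter $t_0$ I consider the probability measure $\nu_{t_0}:=e^{-ut_0}\,d\mu(u)/L(t_0)$. Its moment generating function is
\[
\int e^{su}\,d\nu_{t_0}(u)=\frac{L(t_0-s)}{L(t_0)} .
\]
Using the hypothesis uniformly for $s=\theta\,t_0^{3/2}$ with $\theta$ in compact sets, and expanding $\alpha/(t_0-s)$, I get a mean $m(t_0)\approx \alpha/t_0^2$ and a variance $\sigma^2(t_0)\approx 2\alpha/t_0^3$. After normalizing by $u\mapsto (u-m)/\sigma$ the MGF tends to $e^{\theta^2/2}$. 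By the continuity theorem for moment generating functions, $\nu_{t_0}$ is asymptotically Gaussian on the scale $\sigma\asymp t_0^{-3/2}$.

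\textbf{Step 2: from an integrated to a pointwise statement.} Given $n$, I choose $t_0$ by the saddle point condition $m(t_0)=n$, i.e.\ $t_0\approx\sqrt{\alpha/n}$. Step~1 then gives, for fixed $c>0$,
\[
\mu\big([n,\,n+c\sigma]\big)\approx L(t_0)\,e^{nt_0}\int_0^{c} e^{-t_0\sigma x}\,\frac{e^{-x^2/2}}{\sqrt{2\pi}}\,dx .
\]
Here $t_0\sigma\asymp t_0^{-1/2}\to\infty$, so this mass is dominated by a window of width about $1/t_0$ near $n$. So I will instead combine monotonicity of $a$ directly with the Laplace representation
\[
f(e^{-t})=\sum_n a(n)e^{-nt}.
\]
Since $a$ is increasing, it admits the upper and lower sandwiches
\[
a(n)\sum_{k\ge n}e^{-kt}\le f(e^{-t}),\qquad \sum_{k\le n}a(k)e^{-kt}\le a(n)\sum_{k\le n}e^{-kt}.
\]
Taking $t=t_0(1\pm\varepsilon)$ slightly off the saddle, together with the Gaussian concentration of Step~1 applied to the measure $a(k)e^{-kt}$ itself, pins $a(n)$ between two expressions of the form
\[
\frac{(1+O(\varepsilon))\,f(e^{-t_0})\,e^{nt_0}}{\sqrt{2\pi}\,\sigma_0},
\]
where $\sigma_0^2\sim 2\alpha/t_0^3$ is the variance for the tilted measure $a(k)e^{-kt_0}$ (with $\gamma$ in place of $\gamma+1$).

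\textbf{Step 3: evaluation.} Substituting $t_0=\sqrt{\alpha/n}$ gives
\[
f(e^{-t_0})e^{nt_0}\sim \beta\,(\alpha/n)^{\gamma/2}e^{2\sqrt{\alpha n}},\qquad \sqrt{2\pi}\,\sigma_0=2\sqrt{\pi}\,\alpha^{-1/4}n^{3/4}.
\]
Combining these yields exactly
\[
a(n)\sim\frac{\beta}{2\sqrt\pi}\,\frac{\alpha^{\gamma/2+1/4}}{n^{\gamma/2+3/4}}\,e^{2\sqrt{\alpha n}} .
\]

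\textbf{Main obstacle.} The hard part is Step~2. The hypothesis is only a real-variable asymptotic, so there is no contour integral and no circle method available. The local Gaussian approximation must come entirely from positivity and monotonicity, and the error from the $\varepsilon$-perturbation of $t$ has to be shown to vanish as $\varepsilon\to0$ after $n\to\infty$. This is precisely the content of Ingham's original argument, and in writing it up I would follow his smoothing-plus-monotonicity scheme.
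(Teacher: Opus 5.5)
Steps 1 and 3 are sound. Step 1 is a correct central limit theorem for the tilted measure on the scale $\sigma_0\asymp n^{3/4}$, and Step 3 is the right evaluation. Step 2, however, cannot be carried out as you plan. With only the real-variable hypothesis the statement is actually false, so no argument using just positivity, monotonicity and $f(e^{-t})$ for real $t$ can prove it. The paper gives no proof to compare with; it only quotes the theorem.

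Here is a counterexample. Let $A(n)$ be the claimed right-hand side, fix $0<\delta<\min\{\tfrac12,\tfrac{\sqrt{\alpha}}{2}\}$, and set $a(n)=A(n)\bigl(1+\delta\sin(2\sqrt{n})\bigr)$ for $n\ge N_0$ and $a(n)=0$ for $n<N_0$. We have $\log A(n+1)-\log A(n)=\sqrt{\alpha/n}\,(1+o(1))$, while the oscillating factor changes the logarithm by at most $\frac{\delta}{1-\delta}n^{-1/2}<\sqrt{\alpha}\,n^{-1/2}$. So the sequence is nonnegative and weakly increasing once $N_0$ is large. The terms $A(n)e^{\pm 2i\sqrt{n}}$ contribute to $f(e^{-t})$ the same saddle-point integral as $A(n)$, but with $\sqrt{\alpha}$ replaced by $\sqrt{\alpha}\pm i$. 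That is something of size $e^{(\alpha-1)/t}$ up to powers of $t$, plus an Euler--Maclaurin error $O(t^{\gamma+1}e^{\alpha/t})$. Hence $f(e^{-t})\sim\beta t^{\gamma}e^{\alpha/t}$ still holds, yet $a(n)/A(n)=1+\delta\sin(2\sqrt{n})$ does not tend to $1$.

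The precise failure in Step 2 is a mismatch of scales: you need a \emph{local} limit theorem much finer than the CLT of Step 1.
\begin{itemize}
\item For $n\le k\le n+h$, monotonicity only gives $a(k)e^{-kt_0}\ge e^{-ht_0}\,a(n)e^{-nt_0}$. To lose at most a factor $e^{\varepsilon}$ you must use windows of length $h\le\varepsilon/t_0\asymp\varepsilon n^{1/2}$.
\item Such windows carry tilted mass $\asymp h/\sigma_0\asymp\varepsilon n^{-1/4}\to0$, which is below anything weak convergence can resolve.
\item Moving to $t=t_0(1\pm\varepsilon)$ does not help. It shifts the centre of the tilted measure by $\asymp\varepsilon n$, i.e.\ by $\asymp\varepsilon n^{1/4}$ standard deviations, so $n$ lands in the tail, where Step 1 says nothing in relative terms.
\end{itemize}
Resolving scale $o(1/t_0)$ requires the characteristic function $f(e^{-t_0-iy})/f(e^{-t_0})$ for $|y|$ up to arbitrarily large multiples of $t_0$. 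In other words, you need information on $f(e^{-\tau})$ for complex $\tau$ in cones $|\operatorname{Im}\tau|\le\Delta\operatorname{Re}\tau$. That is exactly where the perturbation above becomes visible: at $\tau=t_0(1+i/\sqrt{\alpha})$ it is of the same order as $\delta f(e^{-t_0})$, exponentially larger than $|\tau|^{\gamma}e^{\alpha/|\tau|}$.

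A correct version of the theorem adds the hypothesis $f(e^{-\tau})\ll|\tau|^{\gamma}e^{\alpha/|\tau|}$ as $\tau\to0$ in every such cone. An example is the corrected form of Ingham's theorem due to Bringmann, Jennings-Shaffer and Mahlburg. With this hypothesis your scheme does go through:
\begin{itemize}
\item the characteristic function is controlled for $|y|\le\Delta t_0$;
\item this gives a local limit theorem at scale $1/(\Delta t_0)$;
\item monotonicity then finishes with a loss of only $1+O(1/\Delta)$, and one lets $\Delta\to\infty$.
\end{itemize}
For the paper's own application this is harmless. For the eta-quotient in Theorem \ref{thm2}, modularity gives $F(e^{-\tau})\sim\beta\tau^{r/2}e^{\alpha/\tau}$ uniformly in every cone $|\arg\tau|\le\psi<\pi/2$.
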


\begin{proof}[Proof of Theorem \ref{thm2}]

From (\ref{gen}), we have
\begin{align*}
       F(q)=  \sum_{n=0}^{\infty}\overline{\mathfrak{C}}_{m,r,s}(n)q^n=\frac{(q^2;q^2)_\infty^{s}(q^{2m};q^{2m})_{\infty}^{r-s}}{(q^m;q^m)_\infty^{2(r-s)}(q;q)_\infty^{2s}}.
\end{align*}
Now we define 
\begin{align*}
    G_a(q):=\frac{1}{(q^a;q^a)_\infty}=\prod_{n=0}^\infty (1-q^{an+a})^{-1},
\end{align*}
and the Dirichlet series
\begin{align*}
    D_a(s):=\sum_{n=0}^\infty \frac{1}{(a+an)^s},
\end{align*}
and it is possible to write as $D_a(s)=\dfrac{1}{a^s}\zeta(s)$, where $\zeta(s)=\sum\limits_{n=1}^\infty \dfrac{1}{n^s}$ is the Riemann zeta function. By considering the Legendre series expansion of $\zeta(s)$, we find that $D_a(s)$ has a simple pole at $s=1$ with residue $\dfrac{1}{a}$. Moreover, using the properties of the Riemann zeta function, we have
\begin{align*}
    D_a(0)=\dfrac{-1}{2} \text{ and the derivative at $0$, } D_a'(0)=\log \left( \sqrt{\frac{a}{2\pi}} \right).
\end{align*}

 By using Lemma \ref{lem*}, we get as $t\to 0^+$,

\begin{align*}
    G_a(e^{-t})\sim t^{\frac{1}{2}}\sqrt{\frac{a}{2\pi}}\exp{\left( \frac{\pi^2}{6at} \right)}.
\end{align*}

 Since 
\begin{align*}
    F(q)&=\frac{G_m(q)^{2(r-s)}G_1(q)^{2s}}{G_2(q)^sG_{2m}(q)^{r-s}}\\
    &\sim \frac{\left(t^{\frac{1}{2}}\sqrt{\frac{m}{2\pi}}\exp{\left( \frac{\pi^2}{6mt} \right)}\right)^{2(r-s)}\left(t^{\frac{1}{2}}\sqrt{\frac{1}{2\pi}}\exp{\left( \frac{\pi^2}{6t} \right)} \right)^{2s}}{\left( t^{\frac{1}{2}}\sqrt{\frac{2}{2\pi}}\exp{\left( \frac{\pi^2}{12t} \right)} \right)^{s}\left( t^{\frac{1}{2}}\sqrt{\frac{2m}{2\pi}}\exp{\left( \frac{\pi^2}{12mt} \right)} \right)^{r-s}},
\end{align*}
by simplification, we will obtain

\begin{align*}
    F(e^{-t})\sim \frac{m^{\frac{r-s}{2}}}{2^r \pi^{\frac{r}{2}}} t^{\frac{r}{2}} \exp{\left( \frac{\pi^2}{4t}\left( \frac{r-s}{m}+s\right) \right)}.
\end{align*}
It is clear that $\overline{\mathfrak{C}}_{m,r,s}(n)$ is weakly increasing. Now by applying Ingham's Theorem \ref{ingham} with $\alpha=  \dfrac{\pi^2}{4}\left( \dfrac{r-s}{m}+s\right),$ $\beta=\dfrac{m^{\frac{r-s}{2}}}{2^r \pi^{\frac{r}{2}}}$, and $\gamma=\dfrac{r}{2}$, we get Theorem \ref{thm2}.    
\end{proof}


\section*{Acknowledgements}

 The first author gratefully acknowledges the Council of Scientific and Industrial Research (CSIR), Government of India, for the financial support.


\bibliographystyle{plain}
\bibliography{biregular.bib}

\end{document}